\newtheorem{theo}{Theorem}[section]
\newtheorem{prop}[theo]{Proposition}
\newtheorem{lemm}[theo]{Lemma}
\newtheorem{coro}[theo]{Corollary}
\newtheorem{rema}[theo]{Remark}
\newtheorem{Defi}[theo]{Definitions}
\newtheorem{example}[theo]{Example}
\title{\bf A generalization of the Murnaghan-Nakayama rule for $K$-$k$-Schur and $k$-Schur functions}
\author{Duc-Khanh Nguyen}
\date{}
\newfont{\gothic}{eufb10}
\begin{document}
\maketitle
\begin{abstract} 
The $K$-$k$-Schur functions and $k$-Schur functions appeared in the study of $K$-theoretic and affine Schubert Calculus as polynomial representatives of Schubert classes. In this paper, we introduce a new family of symmetric functions $\mathcal{F}_\lambda^{(k)}$, that generalizes the constructions via the Pieri rule of $K$-$k$-Schur functions and $ k$-Schur functions. Then we obtain the Murnaghan-Nakayama rule for the generalized functions. The rule is described explicitly in the cases of $K$-$k$-Schur functions and $k$-Schur functions, with concrete descriptions and algorithms for coefficients. Our work recovers the result of Bandlow, Schilling, and Zabrocki for $k$-Schur functions, and explains it as a degeneration of the rule for $K$-$k$-Schur functions. In particular, many other special cases and connections promise to be detailed in the future.  
\end{abstract}
\textit{\\2020 Mathematics Subject Classification.} 05E05, 14N15.\\ 
\textit{Keywords and phrases.} Murnaghan-Nakayama rule, affine symmetric groups, $(k+1)$-cores, $k$-bounded partitions, $K$-$k$-Schur functions, $k$-Schur functions. 

\section{Introduction} 
The classical Murnaghan-Nakayama rule is a combinatorial rule for computing the irreducible character values $\chi_\lambda(\mu)$ of the symmetric group using ribbon tableaux \cite{nakayama1940someII,nakayama1940someI,murnaghan1937characters,littlewood1934group}. In terms of symmetric functions, it can be understood as the decomposition of $p_r.s_\lambda$ to the sum of $s_\mu$, where $p_r$ is the power sum symmetric function, $s_\lambda$ is the Schur function associated to partition $\lambda$. In general, studying the decomposition rules (Monk rule, Pieri rule, Murnaghan-Nakayama rule, Littlewood-Richardson rule) of symmetric functions is very important because it lies at the crossroads of many different domains such as Representation Theory, Schubert Calculus, Algebraic Combinatorics. The Schur functions $s_\lambda$ are characters of polynomial irreducible representations of the general linear group $GL(n)$, and it is the polynomial representative of Schubert classes of the cohomology $H^*(G_{m,n})$ of Grassmannian $G_{m,n}$ \cite{MR1852463}. The product of two symmetric functions may tell us the direct sum decomposition into irreducible modules of a tensor product of two irreducible modules, or the number of certain geometric objects. For this reason, many generalizations of classical Murnaghan-Nakayama
rule were studied, for instance  \cite{tuan2021murnaghan,morrison2018two,tewari2016murnaghan,lubeck2016murnaghan,ross2014loop,konvalinka2012skew,bandlow2011murnaghan,halverson1998murnaghan,fomin1998noncommutative,halverson1995q}.\\

The $k$-Schur functions $s^{(k)}_\lambda$ are a variation of Schur functions $s_\lambda$. They were first introduced in \cite{lapointe2003tableau} to study Macdonald polynomials \cite{macdonald1998symmetric}, and then appeared in the study of affine Schubert Calculus. Namely, they are the polynomial representatives of Schubert classes of the homology $H_*(Gr)$ of the affine Grassmannian $Gr = SL_{k+1}(\mathbb{C}((t)))/SL_{k+1}(\mathbb{C}[[t]])$ \cite{lam2008schubert}. The Pieri rule \cite{lapointe2007k} provides a way to define $k$-Schur functions, and it is an important key to obtaining the Murnaghan-Nakayama rule \cite{bandlow2011murnaghan}.\\

The $K$-$k$-Schur functions $g_\lambda^{(k)}$ are a variation of $k$-Schur functions $s_\lambda^{(k)}$ in $K$-theoretic Schubert Calculus. Namely, when replacing homology $H_*(Gr)$ by $K$-homology $K_*(Gr)$, the Schubert classes are represented by $K$-$k$-Schur functions. The polynomials $g_\lambda^{(k)}$ are simultaneously introduced and characterized via the Pieri rule in \cite{lam2010k, morse2012combinatorics}, with an explicit combinatorial formula given recently by \cite{blasiak2022k}. The $K$-$k$-Schur functions carry continued interest in the mathematical community. For instance, in \cite{blasiak2022k}, the authors introduced symmetric functions called Katalan functions and proved that the $K$-$k$-Schur functions form a subfamily of the Katalan functions. They also conjectured that another subfamily of Katalan functions called the closed $k$-Schur Katalan functions are identified with the Schubert structure sheaves in the $K$-homology of the affine Grassmannian. The conjecture is verified in \cite{ikeda2022closed}.\\

In this paper, we introduce a new family of symmetric functions $\mathcal{F}_\lambda^{(k)}$, that generalizes the constructions via the Pieri rule of $K$-$k$-Schur functions and $k$-Schur functions (Section \ref{noncomfunction}). Then we obtain the Murnaghan-Nakayama rule for the generalized functions (Theorem \ref{MN_Fwk}). In the case of $K$-$k$-Schur functions and $k$-Schur functions, we describe explicitly the rule with concrete description and algorithms for the coefficients (Corollaries \ref{MN_gwk}, \ref{MN_gwk_partition}, Proposition \ref{K_lambda_mu_r} for $g_\lambda^{(k)}$ and Corollaries \ref{MN_swk}, \ref{MN_swk_partition}, Proposition \ref{k_lambda_mu_r} for $s_\lambda^{(k)}$). Our result for $s_{\lambda}^{(k)}$ recovers \cite{bandlow2011murnaghan} and explains it as a degeneration of the rule for $K$-$k$-Schur functions. \\

The proof of the main results is carried out as follows: For Theorem \ref{MN_Fwk}, the key point is the cancellation-free expression of the noncommutative power sum symmetric polynomials $p_r$ (Lemma \ref{p_hook}). This formula is directly obtained from the definition of $p_r$ with the help of the previous reduction steps (Lemmas \ref{he_hook}, \ref{s_hook}). The arguments in the proofs of the lemmas are based on the properties of weak hook words (Lemma \ref{left_right_cancel}). For the consequences on $K$-$k$-Schur functions, Corollary \ref{MN_gwk} is deduced directly from Theorem \ref{MN_Fwk}. Corollary \ref{MN_gwk_partition} is a translation of Corollary \ref{MN_gwk} in terms of partitions. The observations between them describe the formation of a partition that would appear in the Murnaghan-Nakayama rule. By this, we know which partitions will appear with nonzero coefficients in Corollary \ref{MN_gwk_partition}, and an algorithmic description to compute exactly the coefficients in Proposition \ref{K_lambda_mu_r}. For the consequences on $k$-Schur functions, the story is similar but simpler, the process of forming a partition during the transfer from Corollary \ref{MN_swk} to Corollary \ref{MN_swk_partition} is considered as the process from Corollary \ref{MN_gwk} to Corollary \ref{MN_gwk_partition}, but has been degenerated. So the results for the $k$-Schur functions are degenerations of the results for the $K$-$k$-Schur functions.\\

There are some directions to study from our work in the future. Namely, many other special or related cases of $\mathcal{F}_\lambda^{(k)}$ promise to be detailed, for instance, dual $k$-Schur functions \cite{lam2006affine}, affine stable Grothendieck polynomials \cite{lam2010k,lam2006affine}, closed $K$-$k$-Schur functions \cite{ikeda2022closed, takigiku2019pieri}, etc. Furthermore, one can look for the translations of those results via Peterson isomorphism that sends homology $H_*(Gr)$ to quantum cohomology $QH(Fl_n)$ \cite{lam2010quantum, Peterson}, or via $K$-theoretic Peterson isomorphism that sends $K$-homology $K_*(Gr)$ to quantum $K$-cohomology $QK(Fl_n)$ \cite{ikeda2020peterson}.\\

The paper is organized as follows: Sections \ref{Grassmannian_elements}--\ref{bij} present fundamental objects (Grassmannian elements, $(k+1)$-cores, and $k$-bounded partitions) which parametrize $K$-$k$ Schur functions, $k$-Schur function and the generalized symmetric functions $\mathcal{F}_\lambda^{(k)}$, and bijections between them. Section \ref{A_k} introduces a generalized algebra $\mathcal{A}_k$ that captures $0$-Hecke algebras and nilCoxeter algebras as quotient algebras. The weak hook words in this algebra are introduced. Lemma \ref{left_right_cancel} collects important properties of those words to help us obtain cancellation-free expression of noncommutative power sum symmetric polynomials in Section \ref{noncomfunction}. Section \ref{antihook} studies anti-weak hook words. We recall Edelman-Greene insertion to transform an anti-weak hook word to a weak hook word. The relation is packaged into Lemma \ref{to_hook_word}, and will be used later to translate the Murnaghan-Nakayama rule in terms of partitions in Sections \ref{MNKk}, \ref{MNk}. Section \ref{noncomfunction} introduces noncommutative symmetric functions, including the generalized functions $\mathcal{F}_\lambda^{(k)}$, in algebra $\mathcal{A}_k$. The cancellation-free expression of power sum symmetric polynomials is in Lemma \ref{p_hook}.  Section \ref{genMN} states Murnaghan-Nakayama rule for $\mathcal{F}_\lambda^{(k)}$. Sections \ref{MNKk}, \ref{MNk} describe explicitly the rule for $K$-$k$-Schur functions and $k$-Schur functions.\\   

\textbf{Acknowledgments:} 
The author would like to express his gratitude to Prof. Cristian Lenart and Prof. Satoshi Naito to introduce related topics that led to this work. He is grateful to Prof. Anne Schilling and Prof. Mike Zabrocki for explanations in \cite{bandlow2011murnaghan}. The work is sponsored by DAAD postdoctoral fellowship at Otto-von-Guericke-Universität Magdeburg, Germany. The author would like to thank Prof. Petra Schwer for her support during his visit, and Prof. Rebecca Waldecker for her influenced questions in presentation and writing. We are grateful to the referees for their extensive knowledge, valuable comments, and encouragement, which helps to improve the text a lot.

\section{Affine symmetric group}\label{Grassmannian_elements}
\begin{Defi}
\normalfont
Fix $k \in \mathbb{Z}_{\geq 0}$. Let $\widetilde{S}_{k+1}$ be the {\em affine symmetric group} with generators $\{s_0,\dots,s_k\}$ satisfying relations 
\begin{align}
    s_i^2 &= 1 \text{ for all $i$,} \label{R1} \\
    s_is_{i+1}s_i &= s_{i+1}s_is_{i+1} \text{ for all $i$,} \label{R2}\\
    s_is_j &= s_js_i \text{ for all $i-j \ne \pm 1$,} \label{R3}
\end{align}
where the indices are taken from $\mathbb{Z}/(k+1)\mathbb{Z}$. For $w \in \widetilde{S}_{k+1}$, a shortest expression of $w$ in generators is called a {\em reduced word}. The {\em length} $l(w)$ of $w$ is defined to be the length of its reduced word. We write $s_{i_1 \dots i_m}$ for $s_{i_1} \dots s_{i_m}$. The {\em finite symmetric group} $S_{k+1}$ is the subgroup of $\widetilde{S}_{k+1}$ generated by $\{s_1,\dots,s_k\}$. Set $\widetilde{S}^0_{k+1}$ to be the set of minimal length coset representatives of $\widetilde{S}_{k+1}/S_{k+1}$. The elements of $\widetilde{S}^{0}_{k+1}$ are said {\em Grassmannian elements}. 
\end{Defi}
\begin{example}\label{wlambda} 
\normalfont
Let $k=4$, then $s_{23043210}$ is an element in $\widetilde{S}^{0}_{k+1}$. 
\end{example}
\section{\texorpdfstring{$(k+1)$-}.cores and \texorpdfstring{$k$-}.bounded partitions}
\begin{Defi}
\normalfont
A {\em partition} $\lambda$ is a weakly decreasing sequence of non-negative integers $(\lambda_1,\dots, \lambda_l)$. Its {\em length} $l(\lambda)$ is defined by the last index $l'$ such that $\lambda_{l'} >0$. Its {\em size} is defined by $|\lambda| = \lambda_1 +\dots+\lambda_l$. 
We identify $\lambda$ with its {\em Young diagram}, which is a collection of boxes arranged in left-justified rows, with $\lambda_i$ boxes in the $i$th row from the top. The {\em conjugate partition} $\lambda^t$ of $\lambda$ is obtained by reflecting $\lambda$ about its main diagonal. A box in the $i$th row and $j$th column is called a {\em cell} $(i,j)$. We say that it is a {\em $\lambda$-removable corner} if 
\begin{equation*}\label{removable_corner}
    (i,j) \in \lambda, \text{ and } (i+1,j), (i,j+1) \not\in \lambda,
\end{equation*}
and a {\em $\lambda$-addable corner} if 
\begin{align*}\label{addable_corner}
(i,j) &= (1,\lambda_1 + 1), \text{or }\\
(i,j) &= (l(\lambda)+1,1), \text{or }\\
(i,j) &\not\in \lambda, (i-1,j), (i,j-1) \in \lambda.
\end{align*}
The {\em hook-length} of a cell $(i,j)$ in $\lambda$ is the number of cells in $\lambda$, below or to the right of $(i,j)$, including $(i,j)$ itself.\\

Let $\mu=(\mu_1,\dots,\mu_l)$ be another partition. We say that $\mu \geq \lambda$ if $\mu_i \geq \lambda_i$ for all $i$. If $\mu \geq \lambda$, we define the {\em skew Young diagram} $\mu/\lambda$ to be the shape consisting of all boxes in $\mu$ but not in $\lambda$. The {\em size} of $\mu/\lambda$ is defined by $|\mu/\lambda| = |\mu|-|\lambda|$. A skew Young diagram is called a {\em ribbon} if it does not contain any $2\times 2$ square, a {\em vertical strip} if it does not contain two boxes in the same row, and a {\em horizontal strip} if it does not contain two boxes in the same column. We say that a skew shape is {\em connected} if, for any consecutive rows, there exist two cells in the same column. The {\em height $ht(\mu/\lambda)$} of a ribbon $\mu/\lambda$ is defined by the number of occupied rows minus the number of connected components. A {\em tableau} of skew shape $\mu/\lambda$ is a filling of skew diagram $\mu/\lambda$ by integers.\\

Fix $k \in \mathbb{Z}_{\geq 0}$. The {\em $(k+1)$-residue} of a cell $(i,j)$ is $j-i \mod (k+1)$. We define $supp(\mu/\lambda)$ to be the set of $(k+1)$-residues of cells in $\mu/\lambda$. A partition is called a {\em $(k+1)$-core} if it has no cell of hook-length $k+1$. The set of all $(k+1)$-cores is denoted by $\mathcal{C}_{k+1}$. A partition $\lambda$ is called {\em $k$-bounded} if $\lambda_1 \leq k$. The set of all $k$-bounded partitions is denoted by $\mathcal{P}_k$.
\end{Defi}

\begin{example}\label{lambda_kappa} 
\normalfont
Let $k=4$, then $\lambda = (4,2,1,1) \in \mathcal{P}_k$, $\kappa = (6,2,1,1) \in \mathcal{C}_{k+1}$. The tableau of $(k+1)$-residues of $\pi$ is marked red in this picture
$$\begin{array}{c|cccccccccc}
    &1&2&3&4&5&6&7&8&9\\
    \hline
     1&\color{red}{0}&\color{red}{1}&\color{red}{2}&\color{red}{3}&\color{red}{4}&\color{red}{0}&\color{blue}{1}&{2}&{3}\\
     2&\color{red}{4}&\color{red}{0}&\color{blue}{1}&{2}&{3}&{4}&{0}&{1}&{2}\\
     3&\color{red}{3}&\color{blue}{4}&\color{blue}{0}&{1}&{2}&{3}&{4}&{0}&{1}\\
     4&\color{red}{2}&{3}&{4}&{0}&{1}&{2}&{3}&{4}&{0}\\
     5&\color{blue}{1}&{2}&{3}&{4}&{0}&{1}&{2}&{3}&{4}\\
     6&\color{blue}{0}&{1}&{2}&{3}&{4}&{0}&{1}&{2}&{3}\\
     7&{4}&{0}&{1}&{2}&{3}&{4}&{0}&{1}&{2}\\
\end{array}$$
Here, the numbers on the top and the rightmost are indices of columns and rows, respectively. 
The set of $\kappa$-removable corners is $\{(4,1),(2,2),(1,6)\}$. The set of $\kappa$-addable corners is $\{(5,1), (3,2), (2,3), (1,7)\}$. Let $\pi = (7,3,3,1,1,1)$, then $\pi/\kappa$ is the ribbon shaped by blue entries, with support $\{0,1,4\}$.
\end{example}

\section{Bijections}\label{bij}
\begin{Defi}\label{def_bij}
\normalfont
Fix $k \in \mathbb{Z}_{\geq 0}$. By \cite{lapointe2005tableaux}, we have bijections
\begin{equation}\label{bijection}
    \mathcal{P}_k \xrightarrow[]{\phi} \widetilde{S}^0_{k+1} \xrightarrow[]{\mathfrak{s}} \mathcal{C}_{k+1}\xrightarrow[]{\mathfrak{p}} \mathcal{P}_k ,
\end{equation}
with $ \mathfrak{p} \circ \mathfrak{s} \circ \phi = \mathbb{1}$. They are described as follows. 
\begin{itemize}
    \item[1.] $\phi: \mathcal{P}_k \rightarrow \widetilde{S}^0_{k+1}, \lambda \mapsto w_\lambda$, where $w_\lambda = s_{i_1 \dots i_m}$ with $(i_1, \dots, i_m)$ is obtained by reading the $(k+1)$-residue of cells in $\lambda$ from the bottom row to the top row, and in each row from right to left.

    \item[2.] $\mathfrak{s}: \widetilde{S}^0_{k+1} \rightarrow \mathcal{C}_{k+1}, w \mapsto w.\emptyset$, where the action . of $\widetilde{S}_{k+1}$ on $\mathcal{C}_{k+1}$ is defined by: 
    \begin{itemize}
        \item $s_i.\kappa$ is $\kappa$ with all $\kappa$-addable corners of $(k+1)$-residue $i$ added if there is at least one $\kappa$-addable corner of $(k+1)$-residue $i$,
        \item $s_i.\kappa$ is $\kappa$ with all $\kappa$-addable corners of $(k+1)$-residue $i$ removed if there is at least one $\kappa$-removable corner of $(k+1)$-residue $i$,  
        \item $s_i.\kappa$ is $\kappa$ if there are no $\kappa$-addable or $\kappa$-removable corner of $(k+1)$-residue $i$,
    \end{itemize}

    \item[3.] $\mathfrak{p}: \mathcal{C}_{k+1} \rightarrow \mathcal{P}_k, \kappa \mapsto \lambda$, where $\lambda_i$ is the number of cells in the $i$th row of $\kappa$ which have hook-length not exceed $k$.
    
\end{itemize}
We have $\mathfrak{p}^{-1}=\mathfrak{s}\circ \phi$. We define the {\em $k$-conjugate} of $\lambda \in \mathcal{P}_k$ by $\lambda^{(k)}= \mathfrak{p}( \mathfrak{p}^{-1}(\lambda)^t)$. 
\end{Defi}

\begin{example}\label{lambda_kappa_w}
\normalfont
Let $w$ be $s_{23043210}$ in Example \ref{wlambda}, and $\lambda, \kappa$ be partitions in Example \ref{lambda_kappa}. The diagram (\ref{bijection}) applying to $\lambda$ is $\label{bijection_example}
    \lambda \mapsto w \mapsto \kappa \mapsto \lambda$. Indeed, $\lambda \mapsto w$ and $w \mapsto \kappa$ are obtained by reading the green, and red tableaux below, respectively. We have $\lambda^{(k)}=(3,1,1,1,1,1)$.
    
    $$ \begin{array}{c|cccccccccc}
    &1&2&3&4&5&6&7&8&9\\
    \hline
     1&\color{green}{0}&\color{green}{1}&\color{green}{2}&\color{green}{3}&{4}&{0}&{1}&{2}&{3}\\
     2&\color{green}{4}&\color{green}{0}&{1}&{2}&{3}&{4}&{0}&{1}&{2}\\
     3&\color{green}{3}&{4}&{0}&{1}&{2}&{3}&{4}&{0}&{1}\\
     4&\color{green}{2}&{3}&{4}&{0}&{1}&{2}&{3}&{4}&{0}\\
     5&{1}&{2}&{3}&{4}&{0}&{1}&{2}&{3}&{4}\\
     6&{0}&{1}&{2}&{3}&{4}&{0}&{1}&{2}&{3}\\
     7&{4}&{0}&{1}&{2}&{3}&{4}&{0}&{1}&{2}\\
\end{array}
\quad
\quad
\begin{array}{c|cccccccccc}
    &1&2&3&4&5&6&7&8&9\\
    \hline
     1&\color{red}{0}&\color{red}{1}&\color{red}{2}&\color{red}{3}&\color{red}{4}&\color{red}{0}&{1}&{2}&{3}\\
     2&\color{red}{4}&\color{red}{0}&{1}&{2}&{3}&{4}&{0}&{1}&{2}\\
     3&\color{red}{3}&{4}&{0}&{1}&{2}&{3}&{4}&{0}&{1}\\
     4&\color{red}{2}&{3}&{4}&{0}&{1}&{2}&{3}&{4}&{0}\\
     5&{1}&{2}&{3}&{4}&{0}&{1}&{2}&{3}&{4}\\
     6&{0}&{1}&{2}&{3}&{4}&{0}&{1}&{2}&{3}\\
     7&{4}&{0}&{1}&{2}&{3}&{4}&{0}&{1}&{2}\\
\end{array}
$$

\end{example} 

\section{\texorpdfstring{$0$-}.Hecke algebras and nilCoxeter algebras}\label{A_k}
\begin{Defi}
\normalfont
Let $\mathcal{A}_k$ be the associative algebra over $\mathbb{Z}$ with generators $\{A_0,\dots,A_k\}$ satisfying relations (\ref{R2}), (\ref{R3}) (replace $s_i$ by $A_i$). It
is the group algebra of the affine braid group. The {\em $0$-Hecke algebra} $\mathcal{H}_k$ is the associative algebra over $\mathbb{Z}$ with generators $\{H_0,\dots,H_k\}$ satisfying relations $H_i^2 = -H_i$ for all $i$, and (\ref{R2}), (\ref{R3}) (replace $s_i$ by $H_i$) \cite{norton19790}. The {\em nilCoxeter algebra} $\mathcal{N}_k$ is the associative algebra over $\mathbb{Z}$ with generators $\{N_0,\dots,N_k\}$ satisfying relations $N_i^2 = 0$ for all $i$, and (\ref{R2}), (\ref{R3}) (replace $s_i$ by $N_i$) \cite{fomin1994schubert}. Then $\mathcal{H}_k$ and $\mathcal{N}_k$ are quotient algebras of $\mathcal{A}_k$. Namely, 
\begin{equation*}
      \mathcal{H}_k = \mathcal{A}_k/J_k \text{ and } \mathcal{N}_k = \mathcal{A}_k/I_k,
\end{equation*}
where $J_k$ (resp. $I_k$) is the two-sided ideal of $\mathcal{A}_k$ generated by all elements $A_i^2+A_i$ (resp. $A_i^2$).

An expression of $\alpha \in \mathcal{A}_k$ in generators is called a {\em word} of $\alpha$. The {\em weak length} $\widetilde{l}(\alpha)$ of $\alpha$ is defined to be the length of its word. Write $A_{i_1 \dots i_m}$ for $A_{i_1} \dots A_{i_m}$. If $u=A_{i_1 \dots i_m}$ is a word of $\alpha$, we call $\{i_1,\dots,i_m\}$ the {\em support} of $\alpha$, and denote $supp(\alpha)$. For a proper subset $S$ of $[0,k]$, we define the {\em canonical cyclic interval } 
\begin{equation*}\label{canonical_cyclic_interval}
    I_S: a+1 < \dots < k <  0 < \dots < a-1,
\end{equation*}where $a$ is the smallest integer in $[0,k]$ which is not in $S$. A word $u$ is called {\em $k$-connected} if $supp(u)$ is a proper subset of $[0,k]$, and also is an interval in $I_{supp(u)}$. We call $u=A_{i_1 \dots i_m}$ a {\em weak hook word} if with respect to the order $I_{supp(u)}$
\begin{itemize}
    \item[(a)]\label{a} $i_1 > \dots >i_j <i_{j+1} < \dots < i_m$, or \label{hook_a}
    \item[(b)]\label{b} $i_1 > \dots >i_j = i_{j+1} < \dots < i_m$, \label{hook_b}
\end{itemize}
and say {\em hook type} of $u$ is $V$ for (a), and $U$ for (b). The part with only inequalities $>$ (resp. $<$) is called {\em left side} (resp. {\em right side}) of hook form. Replace both $>,<$ by $<,>$ in (a), (b) give us {\em anti-weak hook words}. When $u$ is a weak hook word:
\begin{itemize}
    \item[(i)] The number of ascents $i_t<i_{t+1}$ of $u$ is denoted by $asc(u)$.
    \item[(ii)] With respect to order $I_{supp(u)}$, we consider all letters $a<c$ such that $a \ne c-1 \mod (k+1)$ and there is no letter $b$ in $u$ such that $a<b<c$. Let $u_{min}$ be the smallest possible value of $c$. Then by definition, the number of occurrences of $u_{min}$ in $u$ will be $0$, or $1$, or $2$. 
    \begin{enumerate}\label{ii}
        \item[(ii.1)] \label{ii1} If it is $0$, then $u$ is $k$-connected. 
        \item[(ii.2)] \label{ii2} If it is $1$, then $u$ is not $k$-connected and $u_{min}$ is on only one side of hook form.
        \item[(ii.3)] \label{ii3} If it is $2$, then $u$ is not $k$-connected and $u_{min}$ is on both sides of hook form.
    \end{enumerate}
    We say that $u$ is {\em $k$-weak connected} for the cases (ii.1), (ii.3).
\end{itemize}
Let $X_{i,con,side}^r$ be the set of all weak hook words $u$ in $\mathcal{A}_k$ such that hook type of $u$ is $X\in \{V,U\}$, $\widetilde{l}(u)=r$, $asc(u)=i$, connected type of $u$ is $con = \begin{cases}
c=\text{$k$-connected},\\ \overline{c}=\text{not $k$-connected}, \\
wc=\text{$k$-weak connected}, \\
\overline{wc}=\text{ not $k$-weak connected},
\end{cases}
$
and side of $u_{min}$ in hook form is $side \in \{\mathrm{left, right}\}$. Let $X^r_{i,con}$ be the set of all weak hook words $u$ of hook type $X$, $\widetilde{l}(u)=r$, $asc(u)=i$, connected type $con$. Let $X^r_{i}$ be the set of all weak hook words $u$ of hook type $X$, $\widetilde{l}(u)=r$, $asc(u)=i$.
\end{Defi}

\begin{example}
\normalfont
Let $k=4$. We have $supp(A_{0424}) = \{0,2,4\}$, $I_{\{0,2,4\}}=2<3<4<0$. So $A_{0424}$ is a weak hook word of type $V$, with only one ascent $2<4$, not $k$-connected, $(A_{0424})_{min} = 4$ is on both sides of hook form (the left side is $0>4>2$, the right side is $2<4$). Hence $A_{0424} \in V^4_{1,wc}$. Other examples are $A_{2240} \in U^4_{2,\overline{wc},\mathrm{right}}$, $A_{4224} \in U^4_{1,wc}$.
\end{example}

\begin{lemm}\label{left_right_cancel}
Let $X\in\{U,V\}$. We have
\begin{itemize}
    \item[1.] $X_{i}^r$ is the disjoint union of the sets $X_{i,wc}^r$, $X^r_{i,\overline{wc},\mathrm{left}}$, $X^r_{i,\overline{wc},\mathrm{right}}$,
    \item[2.] there is a bijection between $X_{i,\overline{wc},\mathrm{right}}$ and $X_{i-1,\overline{wc},\mathrm{left}}$ that preserves value in $\mathcal{A}_k$, 
    \item[3.] $X_{0,\overline{wc},\mathrm{right}}$, $V^r_{r-1,\overline{wc},\mathrm{left}}$, $U^r_{r-2,\overline{wc},\mathrm{left}}$, $X^r_i\, (i<0)$ are empty sets.
\end{itemize}
\end{lemm}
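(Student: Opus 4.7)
The plan is to handle the three parts in sequence; most of the work lies in Part 2.

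For Part 1, the trichotomy (ii.1)--(ii.3) partitions $X^r_i$ by the number of occurrences of $u_{\min}$, namely $0$, $1$, or $2$. Zero occurrences yields a $k$-connected word and two occurrences is $wc$ by definition, so both sit in $X^r_{i,wc}$; a single occurrence is $\overline{wc}$ with $u_{\min}$ on exactly one side. The three subsets are therefore pairwise disjoint and exhaust $X^r_i$.

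For Part 2, I would define $\phi\colon X^r_{i,\overline{wc},\mathrm{right}} \to X^r_{i-1,\overline{wc},\mathrm{left}}$ by taking the unique occurrence of $u_{\min}$ on the right side of $u$ and sliding it leftward, using only the commutation relations (\ref{R3}) of $\mathcal{A}_k$, to the unique position in the left side that keeps the left side strictly decreasing in $I_{supp(u)}$. The crux is the auxiliary claim that $A_{u_{\min}}$ commutes with every letter $b$ of $u$ with $b < u_{\min}$ in $I_S$, where $S = supp(u)$. Enumerating such $b$ as $s_1 < s_2 < \cdots < s_Q = \alpha$ in $I_S$, the minimality of $u_{\min}$ among gap-values forces $s_{t+1} \equiv s_t + 1 \pmod{k+1}$ for every $t < Q$, so $\{s_1,\dots,s_Q\}$ is a contiguous arc in $\mathbb{Z}/(k+1)\mathbb{Z}$. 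The defining gap condition at $u_{\min}$ rules out $u_{\min} \equiv \alpha + 1 \pmod{k+1}$, while the minimality of $s_1$ in $I_S$ (together with the fact that the index absent from $I_S$ is itself not in $S$) rules out $u_{\min} \equiv s_1 - 1 \pmod{k+1}$; hence $u_{\min}$ is non-adjacent in $\mathbb{Z}/(k+1)\mathbb{Z}$ to every $s_t$, and (\ref{R3}) gives the commutation. Every letter of $u$ strictly between the source and target positions of $u_{\min}$ lies in this arc (either a left-side letter below $u_{\min}$ or a right-side letter between the bottom and $u_{\min}$), so the slide is valid in $\mathcal{A}_k$. A direct position check then shows that $\phi(u)$ is a weak hook word of the same hook type, with $u_{\min}$ appearing once on the left side and exactly $i-1$ ascents. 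The symmetric right-moving slide gives a two-sided inverse, so $\phi$ is a value-preserving bijection.

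For Part 3, each emptiness follows from a shape constraint. If $asc(u) = 0$, the right side of $u$ contains at most the bottom letter (the minimum of $u$), which cannot equal $u_{\min}$; this gives $X^r_{0,\overline{wc},\mathrm{right}} = \emptyset$. For $V^r_{r-1,\overline{wc},\mathrm{left}}$ and $U^r_{r-2,\overline{wc},\mathrm{left}}$, the ascent count forces the left side to consist of a single letter which is the minimum of $u$ and therefore cannot be $u_{\min}$. The case $i < 0$ is vacuous.

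The main obstacle I expect is the commutation step in Part 2: weaving together the total order $I_{supp(u)}$ with arithmetic modulo $k+1$ to pin down the non-adjacency of $u_{\min}$ to the arc of letters below it, and then carefully tracking positions to verify that $\phi(u)$ carries the claimed hook type, ascent count and location of $u_{\min}$.
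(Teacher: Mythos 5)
Your argument is correct and follows the paper's own proof: parts 1 and 3 are the same definitional observations the paper dismisses as trivial, and your map $\phi$ in part 2 is exactly the paper's slide $\tau$, which interchanges the single right-side occurrence of $u_{\min}$ with the contiguous block of smaller letters via the commutation relations (\ref{R3}) and is its own inverse. Your verification that $u_{\min}$ is non-adjacent mod $k+1$ to the arc of letters below it is a correct filling-in of the commutation step that the paper merely asserts.
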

\begin{proof}\begin{itemize}
    \item[1.] Trivial by definition.
    \item[2.] For each $u \in X_{i,\overline{wc},\mathrm{right}}$, we see that $u_{min}$ commutes with the substring consisting of indices less than $u_{min}-1$ concerning the order $I_{supp(u)}$. Let $\tau(u)$ be the result after interchanging $u_{min}$ with this substring. Then $\tau(u) \in X_{i-1,\overline{wc},\mathrm{left}}$ and $\tau(u)=u$ in $\mathcal{A}_k$. The invert map of $\tau$ is still $\tau$, but from $X_{i-1,\overline{wc},\mathrm{left}}$ to $X_{i,\overline{wc},\mathrm{right}}$.
    \item[3.]Trivial by definition.
\end{itemize}
\end{proof}

\section{Anti-weak hook words to reduced hook words}\label{antihook}
\begin{Defi}
\normalfont
Let $\mathcal{M}$ be the monoid consisting of all words in the alphabet $\{a<a+1<\dots\}$. The {\em Coxeter-Knuth equivalence} on $\mathcal{M}$ is defined by relations
\begin{align}
    i(i+1)i &= (i+1)i(i+1) \text{ for all $i$,} \label{M1}\\
    kij &=kji \text{ for all $i<k\leq j, |i-j| \geq 2$,} \label{M2}\\
    ijk &=jik \text{ for all $i\leq k<j, |i-j| \geq 2$.} \label{M3}
\end{align}
Given a tableau $P$ with weakly increasing rows $P_1, \dots, P_l$ and $x_0 \in \mathcal{M}$, the {\em Edelman-Greene insertion} of $x_0$ into $P$ is defined as follows \cite[Definition 6.20]{edelman1987balanced}.
\begin{itemize}
    \item[1.] For each $i\geq 0$, if there is no entry in $P_{i+1}$ greater than $x_i$, then insert $x_i$ to the end of $P_{i+1}$ and stop. Otherwise, let $x_{i+1}$ be the first entry in $P_{i+1}$ greater than $x_i$.
    \item[2.] We replace $x_{i+1}$ in $P_{i+1}$ by $x_i$ if $x_i$ and $(x_i+1)$ do not appear together in $P_{i+1}$. 
    \item[3.] Repeat above steps with $x_{i+1}$ and $P_{i+2}$. The algorithm terminates when some $x_i$ is added to the end of a row. The result is a new tableau, which we denote $P \xleftarrow[]{EG} x_0$.
\end{itemize}
For a word $x=i_1 \dots i_m$ in $\mathcal{M}$, let $EG(x)$ be the tableau 
\begin{equation*}
    (\emptyset \xleftarrow[]{EG} i_1)\xleftarrow[]{EG} \dots \xleftarrow[]{EG} i_m.
\end{equation*}
For a given tableau $P$, let $\rho(P)$ be the word obtained by reading the rows of $P$ from bottom to top, and reading from left to right in each row.
\end{Defi}

\begin{example}\label{EGx}
\normalfont
Let $\mathcal{M}$ be the monoid with alphabet $\{1<2<3<4<5<6\}$. Let $x=24541$. The process to create $EG(x)$ is 
\begin{equation*}
    \emptyset \rightarrow 2 \rightarrow 24 \rightarrow 245 \rightarrow \begin{array}{ccc}
    2&4&5\\
    5&&
    \end{array} \rightarrow \begin{array}{ccc}
    1&4&5\\
    2&&\\
    5
    \end{array}
\end{equation*}
We have $\rho(EG(x)) = 52145$.
\end{example}
\begin{Defi}
\normalfont
Fix $k\in \mathbb{Z}_{\geq 0}$. Let $x$ be a word with $supp(x) \subsetneq \mathbb{Z}/(k+1)\mathbb{Z}$, and $\mathcal{M}_{x}$ be the monoid with alphabet $I_{supp(x)}$.
\end{Defi}
\begin{lemm}[Lemma 6.23, \cite{edelman1987balanced}]\label{EG_equiv}
Consider $x$ as a word in $\mathcal{M}_x$. If $s_x$ a reduced, then
\begin{itemize}
    \item[1.] $x$ and $\rho(EG(x))$ are Coxeter-Knuth equivalent, $s_x = s_{\rho(EG(x))}$.
    \item[2.] $EG(x)$ is a row and column strict tableau.
\end{itemize}
\end{lemm}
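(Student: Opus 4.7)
The plan is to prove both parts simultaneously by induction on $m$, the length of $x = i_1 \dots i_m$, maintaining two invariants about the running tableau $P^{(j)} = EG(i_1 \dots i_j)$: that it is row and column strict, and that $\rho(P^{(j)})$ is Coxeter-Knuth equivalent to $i_1 \dots i_j$. The base case $m=0$ is vacuous, and the outer induction is immediate from the inductive step, so the entire content lies in analyzing a single insertion $P \xleftarrow{EG} x_0$.

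For a single insertion, I would track the bumping sequence $x_0, x_1, x_2, \dots$ row by row, where $x_i$ is bumped into row $P_{i+1}$ producing $x_{i+1}$ (or terminates). My goal in row $i+1$ is to show that the reading word after the insertion is Coxeter-Knuth equivalent to the reading word before, concatenated with $x_i$ on the right. Inside a single row $P_{i+1}$, write $P_{i+1} = a_1 < \dots < a_p$ and suppose $x_{i+1} = a_q$ is the leftmost entry strictly greater than $x_i$. In the generic case, where $x_i$ and $x_i+1$ do not both sit in $P_{i+1}$, the entry $a_q$ is replaced by $x_i$ and bumped down. One checks by the commutation relation (M3) that the segment $a_q a_{q+1} \dots a_p\, x_i \equiv_{CK} x_i\, a_q a_{q+1} \dots a_p$ because $x_i < a_q < a_{q+1} < \dots$, and symmetrically after replacement $x_i\, a_{q+1} \dots a_p \equiv a_{q+1} \dots a_p\, x_i$ via (M3). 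This is the commutation that absorbs $x_i$ into the row and releases $x_{i+1} = a_q$ to be inserted into $P_{i+2}$. In the exceptional case where both $x_i$ and $x_i + 1$ already occur in $P_{i+1}$, step 2 of the algorithm forbids replacement, and one uses the braid relation (M1) in the form $(x_i)(x_i+1)(x_i) = (x_i+1)(x_i)(x_i+1)$ together with (M2) to absorb the new letter without altering the row, while still releasing $x_{i+1}$ downward. Iterating this analysis row by row establishes Coxeter-Knuth equivalence and, via Lemma~\ref{EG_equiv} itself (used inductively), the identity $s_x = s_{\rho(EG(x))}$, since Coxeter-Knuth equivalences are exactly the relations (\ref{R2}), (\ref{R3}) translated into $\mathcal{M}_x$.

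For part 2, I would verify the invariants separately. Row-strictness of $P^{(j+1)}$: in the generic case the replaced entry $a_q$ is greater than $x_i$ and $a_{q-1} < x_i$ (by the choice of $a_q$ as the leftmost entry exceeding $x_i$), so the row remains strictly increasing; in the exceptional case the row is unchanged. Column-strictness: the element $x_{i+1}$ bumped into row $i+2$ equals $a_q$, strictly greater than the corresponding column entry of $P_{i+2}$ by the row-strictness just established and a comparison using the fact that the entry directly below $a_q$ in $P$ was already less than $a_q$. The terminal step, where $x_t$ is appended to the end of an empty or shorter row, is where one uses the assumption that $s_x$ is reduced: this prevents the degenerate cases in which $x_t$ would equal an existing entry immediately above it, which would otherwise violate column-strictness. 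This is where the reducedness hypothesis is essential and is the only place it enters.

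The main obstacle is the exceptional clause in step 2 of the algorithm, where $x_i$ and $x_i+1$ both appear in the target row. Here the naive column-strictness argument breaks down, and one must invoke the braid relation (M1) to reroute the Coxeter-Knuth equivalence through a local rewriting, and simultaneously verify that the bumped letter $x_{i+1}$ still dominates the entry above it in the next row. I would expect this case to require a careful enumeration of the possible relative positions of $x_i$, $x_i+1$, and the element directly above $x_{i+1}$'s landing cell, together with a use of reducedness to exclude the configurations that would force two equal letters to become column-adjacent.
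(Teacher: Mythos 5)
This lemma is not proved in the paper at all: it is imported verbatim as Lemma~6.23 of Edelman--Greene's \emph{Balanced tableaux}, so there is no in-paper argument to compare against. Your outline is the standard single-insertion induction from that source, and the skeleton (track one bump $P \xleftarrow{EG} x_0$ row by row, use (\ref{M2})/(\ref{M3}) to commute the inserted letter through the row in the generic case and (\ref{M1}) in the exceptional case, and carry row/column strictness as an invariant) is the right one.

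That said, as written the proposal has genuine gaps. First, the exceptional case --- where $x_i$ and $x_i+1$ both sit in $P_{i+1}$, the row is left unchanged, and $x_{i+1}=x_i+1$ is bumped --- is exactly what makes Edelman--Greene insertion differ from RSK and is the heart of Lemma~6.23; you explicitly defer it (``I would expect this case to require a careful enumeration\dots''), so the proof is not actually complete where it matters most. Second, your claim that reducedness is used \emph{only} at the terminal appending step is incorrect: it is also needed in the generic replacement step. If $x_i$ already occurs in $P_{i+1}$ but $x_i+1$ does not, the algorithm \emph{does} replace $a_q$ by $x_i$, and since $a_{q-1}=x_i$ this would create two equal adjacent entries; one must rule this configuration out by observing that it forces a factor $x_i x_i$ (after commutations) in a word Coxeter--Knuth equivalent to $x$, contradicting reducedness. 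Third, the column-strictness step has the inequality reversed: under the paper's convention (rows numbered from the top, columns strictly increasing downward) the entry of $P_{i+2}$ below $a_q$ is \emph{greater} than $a_q$, which is what forces the bumping path to move weakly left; the correct comparison is then that the landing position $q'\le q$ of $x_{i+1}$ sits below an entry of the \emph{modified} row $i+1$ that is at most $x_i < x_{i+1}$. These are all repairable along standard lines, but they need to be carried out, not asserted.
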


\begin{lemm}\label{xsAHN_reduced} Let $x$ be a word in $\mathcal{M}$.
\begin{itemize}
\item[1.]If one of $s_x, H_x, N_x$ is reduced, then the others are reduced.
\item[2.]If $s_x=s_y$ and they both are reduced, then $A_x=A_y$, and so $H_x=H_y, N_x=N_y$.
\end{itemize}
\end{lemm}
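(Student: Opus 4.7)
The plan is to derive both parts from Matsumoto's theorem on reduced expressions, together with the observation that the braid relation (\ref{R2}) and the commutation relation (\ref{R3}) are common to the Coxeter group $\widetilde{S}_{k+1}$, to $\mathcal{A}_k$, and to its two quotients $\mathcal{H}_k$ and $\mathcal{N}_k$; only the square relations distinguish them. Consequently, any identity in $\widetilde{S}_{k+1}$ that is proved using only (\ref{R2}) and (\ref{R3}) lifts verbatim to $\mathcal{A}_k$ and then descends to $\mathcal{H}_k$ and $\mathcal{N}_k$.

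I would handle part 2 first, since it is the conceptual core. Suppose $x=i_1\dots i_m$ and $y=j_1\dots j_m$ are two reduced words with $s_x=s_y$. By Matsumoto's theorem, $x$ can be transformed into $y$ by a finite sequence of applications of (\ref{R2}) and (\ref{R3}), without ever invoking $s_i^2=1$. Each such step is an identity in $\mathcal{A}_k$ (the defining relations of $\mathcal{A}_k$ being precisely (\ref{R2}) and (\ref{R3}) with $A_i$ in place of $s_i$), hence $A_x=A_y$. Applying the canonical quotients $\mathcal{A}_k\twoheadrightarrow\mathcal{H}_k$ and $\mathcal{A}_k\twoheadrightarrow\mathcal{N}_k$ then immediately yields $H_x=H_y$ and $N_x=N_y$.

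For part 1, I would argue by contrapositive. If $x$ is not reduced as an expression for $s_x$, then by the strong exchange (deletion) condition one can find a sequence of (\ref{R2}) and (\ref{R3}) moves producing an adjacent repetition inside $A_x$, i.e.\ $A_x=A_u A_iA_i A_v$ in $\mathcal{A}_k$ for some subwords $u,v$. Passing to the quotients, $H_x=-H_uH_iH_v$ is shorter (up to a global sign) via $H_i^2=-H_i$, and $N_x=0$ via $N_i^2=0$; in either case $H_x$ and $N_x$ fail to be reduced. Conversely, if $s_x$ is reduced, then no sequence of (\ref{R2}) and (\ref{R3}) moves applied to $x$ can ever produce an adjacent repetition, and since those moves preserve word length, neither $H_x$ nor $N_x$ can be shortened in its respective algebra, so both are reduced.

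The only real subtlety I anticipate is terminological: the paper does not spell out what "reduced" means for elements of $\mathcal{H}_k$ and $\mathcal{N}_k$. I would take it to mean that the weak length of the word coincides with $\ell(s_x)$, i.e., $x$ is a reduced expression in the classical Coxeter sense (equivalently, $N_x\neq 0$, and equivalently, $H_x$ is a standard basis element of $\mathcal{H}_k$). Under this reading both statements follow at once from Matsumoto's theorem and the structure of the quotients $\mathcal{A}_k\twoheadrightarrow\mathcal{H}_k,\mathcal{N}_k$, and no combinatorial computation is required.
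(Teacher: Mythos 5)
Your argument is correct and is essentially the paper's own proof: both rest on the fact that the braid and commutation moves are shared by $\widetilde{S}_{k+1}$, $\mathcal{A}_k$, $\mathcal{H}_k$, $\mathcal{N}_k$, invoking Tits/Matsumoto to relate two reduced words by those moves alone (part 2) and to bring a non-reduced word to one with an adjacent repetition (part 1). The only cosmetic difference is that the paper phrases the moves via the Coxeter--Knuth relations (\ref{M1})--(\ref{M3}) rather than (\ref{R2})--(\ref{R3}) directly.
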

\begin{proof}
\begin{itemize}
    \item[1.] If one of $s_{x}, H_x, N_x$ is not reduced, then some $x'$ of form $\dots jj \dots$ obtained from $x$ by (\ref{M1}), (\ref{M2}), (\ref{M3}). It contradicts the fact that one of $s_x, H_x, N_x$ is reduced. 
    \item[2.] $s_x=s_y$ means $x,y$ are transformed to each other by (\ref{M1}), (\ref{M2}), (\ref{M3}), or (\ref{R1}). When they both are reduced, we do not apply (\ref{R1}). 
\end{itemize}
\end{proof}

\begin{lemm}\label{to_hook_word} 
Let $x$ be an anti-weak hook word in $\mathcal{M}_x$. There is a reduced hook word $H_{\overline{x}}$ (resp. $N_{\overline{x}}$) such that $H_x=H_{\overline{x}}$ (resp. $N_x = N_{\overline{x}}$), $|\text{left side of $x$}| = |\text{right side of $\overline{x}$}|$.
\end{lemm}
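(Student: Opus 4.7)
The plan is to construct $\overline{x}$ via Edelman-Greene insertion: set $\overline{x} := \rho(EG(x))$, where $EG(x)$ is the Edelman-Greene tableau of $x$.

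First, I argue that $EG(x)$ has hook shape $(j, 1^{m-j})$, where $m = \widetilde{l}(x)$ and $j$ is the length of the left side of $x$. Since $x$ begins with a strictly increasing run $i_1 < \dots < i_j$ (with respect to $I_{supp(x)}$), these first $j$ letters are appended to row 1 of the insertion tableau without triggering any bumping, so row 1 becomes $(i_1,\dots,i_j)$. The remaining letters $i_{j+1},\dots,i_m$ form a strictly decreasing tail with every entry strictly less than $i_j$; each one launches a bumping chain beginning in row 1. My claim is that the chain proceeds straight down column 1, contributing exactly one new cell per letter while leaving row 1 of length $j$. This is the main combinatorial obstacle and I plan to prove it by induction on the length of the decreasing tail, using the ``consecutive-together'' clause of the EG insertion rule to control whether a replacement occurs in a given row and to guarantee that the bumped letter lands in column 1 of the next row rather than extending any row to the right.

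Second, granted the hook shape of $EG(x)$, Lemma \ref{EG_equiv}(2) gives strict increase in both rows and columns of the tableau. Hence $\overline{x} = \rho(EG(x))$ reads the single column bottom-to-top (strictly decreasing) and then row 1 left-to-right (strictly increasing), meeting at the minimal entry $P_{1,1}$. This is exactly the pattern $> \dots > \; < \dots <$ of a weak hook word of type $V$, and its right side coincides with row 1 of $EG(x)$, which has length $j = |\text{left side of } x|$ as required.

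Third, for the algebraic identity, Lemma \ref{EG_equiv}(1) supplies $s_x = s_{\overline{x}}$ with both expressions reduced, and Lemma \ref{xsAHN_reduced}(2) lifts this to $A_x = A_{\overline{x}}$ in $\mathcal{A}_k$, which projects to $H_x = H_{\overline{x}}$ in $\mathcal{H}_k$ and $N_x = N_{\overline{x}}$ in $\mathcal{N}_k$. For anti-weak hook words of type (b) (with $i_j = i_{j+1}$), the expression $s_x$ is not reduced, so I preprocess by absorbing the repetition using $H_{i_j}^2 = -H_{i_j}$ in $\mathcal{H}_k$ and $N_{i_j}^2 = 0$ in $\mathcal{N}_k$, reducing to a type (a) anti-weak hook word to which the above construction applies; the side-length count is unaffected. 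The hardest piece throughout is Step 1: the hook-shape claim is intuitively clear from small examples but requires delicate bookkeeping in the induction, as a single bump escaping column 1 would break the construction.
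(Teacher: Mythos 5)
Your overall route is the same as the paper's: preprocess to a reduced anti-weak hook word, apply Edelman--Greene insertion, argue that the resulting tableau has hook shape so that its row reading word is a hook word, and transport the algebraic identity through Lemmas \ref{EG_equiv} and \ref{xsAHN_reduced}; the count $|\text{left side of }x| = |\text{columns of }EG| = |\text{right side of }\overline{x}|$ is identical to the paper's step 3. The genuine gap is in your preprocessing. You assume the only source of non-reducedness is the peak repetition $i_j = i_{j+1}$ of type (b), so that after absorbing it you obtain a word to which Lemma \ref{EG_equiv} (whose hypothesis is that $s_x$ is reduced) applies. But an anti-weak hook word may repeat a letter once on its increasing side and once on its decreasing side, and such a word need not be reduced even when it has type (a). For instance, with $k=4$ the word $x = A_{1242}$ is a type (a) anti-weak hook word with $H_x = H_1H_2H_4H_2 = -H_1H_2H_4$ not reduced; running your construction directly on $1242$ yields the tableau with first row $122$ and second row $4$, whose reading word $4122$ is neither reduced nor a weak hook word. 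The paper's own example $x = 2455421$ already exhibits such cross-side repeats (the letters $2$ and $4$), which is exactly why its step 1 is more elaborate: for every letter occurring twice it commutes the later occurrence past the adjacent block of larger letters so as to bring the two occurrences together, and only then contracts $ii$ to $i$, producing a reduced anti-weak hook word $\tilde{x}$ with the same left-side length, to which the EG argument is then applied.

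Two smaller remarks. First, both you and the paper leave the hook-shape claim for $EG$ of a reduced anti-weak hook word essentially as an assertion; you at least isolate it as the main obstacle and sketch the induction, so I do not count this against you relative to the paper. Second, each contraction $H_iH_i = -H_i$ introduces a sign that neither your write-up nor the paper's tracks explicitly; if you keep the contraction step you should either check that the signs cancel or weaken the conclusion to an equality up to sign.
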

\begin{proof}
We construct $H_{\overline{x}}$ as follows.
\begin{itemize}
    \item[1.] For each $i$ appearing twice in $x$, we pick the last letter, interchange with the substring of letters greater than $i+1$ standing right before $i$, then replace all $ii$ by $i$. Let $\tilde{x}$ be the final result. Then $\tilde{x}$ is an anti-weak hook word in $\mathcal{M}_x$. 
    \item[2.] $H_{\tilde{x}}$ is a reduced word. Then by Lemma \ref{xsAHN_reduced}.1, $s_{\tilde{x}}$ is reduced, and then by Lemma \ref{EG_equiv}.1, $s_{\tilde{x}}=s_{\rho(EG(\tilde{x}))}$, and so by Lemma \ref{xsAHN_reduced}.2, $H_{\tilde{x}} = H_{\rho(EG(\tilde{x}))}$. Lemma \ref{EG_equiv}.2 says that $EG(\tilde{x})$ is a row and column strict tableau. Now, more than that, it has form 
    \begin{equation*}
        \begin{array}{ccccc}
             *&*&\dots&*&*\\
             \vdots&&&&\\
             *&&&&\\
        \end{array}
    \end{equation*}
    because $\tilde{x}$ is an anti-weak hook word. Then $\rho(EG(\tilde{x}))$ is a hook word in $\mathcal{M}_x$.
    \item[3.] Set $\overline{x}=\rho(EG(\tilde{x}))$, we can see 
    \begin{equation*}
        |\text{left side of $x$}| = |\text{left side of $\tilde{x}$}| = |\text{columns of $EG(\tilde{x})$}| = |\text{right side of $\overline{x}$}|.
    \end{equation*}
\end{itemize}
$H_{\overline{x}}$ is a word we are looking for. For $N_x$, we assume it is reduced and skip step 1.
\end{proof}
\begin{example}
\normalfont
Let $k=6$, and $x=2455421$. Then $\mathcal{M}_x$ is the monoid with alphabet $\{1<2<3<4<5<6\}$. We can see $x$ is an anti-weak hook word in $\mathcal{M}_x$. The words in the proof of Lemma \ref{to_hook_word} are $\tilde{x} = 24541$, $\overline{x}=52145$ by Example \ref{EGx}. The left side of $x$ is $2<4<5$ and the right side of $\overline{x}$ is $1<4<5$.
\end{example}

\section{Noncommutative symmetric functions}\label{noncomfunction}
\begin{Defi}
\normalfont
Fix $0\leq r \leq k$. Let $A$ be a proper subset of $r$ elements in $\mathbb{Z}/(k+1)\mathbb{Z}$. Set $d_A = A_{i_1  \dots i_r}$ and $i_A=A_{i_r \dots i_1}$, where $(i_1,\dots,i_r)$ is an arrangement of $A$ such that if $i,i+1 \in A$, then $i+1$ occurs before $i$. We call $d_A$ (resp. $i_A$) the {\em cyclically decreasing} (resp. {\em increasing}) {\em element} associated to $A$. The elements do not depend on the choice of $(i_1, \dots, i_r)$. 
\end{Defi}
\begin{example}\label{dAiA}
\normalfont
Let $k=4, A=\{0,2,4\}$. Then $d_A = A_{042} = A_{024}$, $i_A =A_{240} = A_{420}$. 
\end{example}
\begin{Defi}
\normalfont
Denote $\binom{\mathfrak{S}}{r}$ the set of all $r$-combinations of a set $\mathfrak{S}$. Following \cite{lam2006affine}, we define the {\em noncommutative homogenous symmetric functions} 
\begin{equation}\label{h_r}
h_r = \sum\limits_{A \in \binom{[0,k]}{r}} d_A,
\end{equation}
the {\em noncommutative elementary symmetric functions}
\begin{equation}\label{e_r}
e_r = \sum\limits_{A \in \binom{[0,k]}{r}} i_A,
\end{equation}
the {\em noncommutative hook Schur functions}
\begin{equation}\label{s_r_i}
s_{(r-i,1^i)} =  \sum\limits_{j=0}^{i} (-1)^j h_{r-i+j}e_{i-j},
\end{equation}
and the {\em noncommutative power sum symmetric functions}
\begin{equation}\label{p_r}
p_r = \sum\limits_{i=0}^{r-1} (-1)^i s_{(r-i,1^i)}.
\end{equation}
\end{Defi}
We need the following lemmas to get a cancellation-free expression of $p_r$. First, to simplify formulas, we write $\underline{S}$ for $\sum\limits_{u\in S} u$. Obviously, 
\begin{equation}\label{underline_sum}
 \underline{S\cup B} = \underline{S}+\underline{B} \text{ if } S\cap B =\emptyset.   
\end{equation}

\begin{lemm}\label{he_hook}
For $0 \leq i \leq r \leq k$, we have $h_{r-i}e_i = \underline{V_i^r} + \underline{V_{i-1}^r} + \underline{U_{i-1}^r}$.
\end{lemm}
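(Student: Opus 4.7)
Plan: The proof goes by constructing an explicit bijection $\Phi$ between pairs $(A,B)$ with $A,B \subseteq [0,k]$, $|A|=r-i$, $|B|=i$, and the disjoint union $V_i^r \sqcup V_{i-1}^r \sqcup U_{i-1}^r$, such that $\Phi(A,B)$ equals $d_A\,i_B$ in $\mathcal{A}_k$. The main tool is to pass to the canonical cyclic interval $I_S$ associated with $S = A \cup B$, which is well-defined since $|S| \le |A|+|B| = r \le k$ makes $S$ a proper subset of $[0,k]$.

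First I would rewrite $d_A$ and $i_B$ with respect to $I_S$ rather than their native orders $I_A, I_B$. The key observation is that for any pair $i, i+1 \in A$ (indices read modulo $k+1$), both elements lie in $S$; in particular $i+1$ is not the smallest element of $[0,k]$ missing from $S$, and the explicit description of $I_S$ then yields $i+1 > i$ in $I_S$. Hence the arrangement of $A$ in $I_S$-decreasing order places $i+1$ before $i$ whenever required by the defining condition of $d_A$, and therefore equals $d_A$ in $\mathcal{A}_k$ (any two valid arrangements differ by commutations of pairs of generators that are not cyclically adjacent). The dual argument shows $i_B$ equals the $I_S$-increasing arrangement of $B$. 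Writing $u = a_1 \cdots a_{r-i}$ and $v = b_1 \cdots b_i$ for these representations, one has $d_A\,i_B = uv$ in $\mathcal{A}_k$.

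Next I would identify the hook type of $uv$ by comparing its last decreasing letter $a_{r-i} = \min_{I_S}(A)$ with its first increasing letter $b_1 = \min_{I_S}(B)$. If $a_{r-i} < b_1$ in $I_S$, the word $uv$ strictly decreases to $a_{r-i}$ and then strictly increases, making it a type-$V$ hook with pivot $a_{r-i}$ and exactly $i$ ascents, hence in $V_i^r$. If $a_{r-i} > b_1$, the descent continues one more step through $b_1$ before turning upward, shifting the pivot to $b_1$ and giving $i-1$ ascents, hence $uv \in V_{i-1}^r$. The remaining case $a_{r-i} = b_1$ happens precisely when this common value lies in $A \cap B$; the word then has an equality at consecutive positions $r-i$ and $r-i+1$, giving a type-$U$ hook with $i-1$ ascents, so $uv \in U_{i-1}^r$. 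Any further repetitions between $u$ and $v$ coming from other elements of $A \cap B$ cause no trouble, because the hook-word definitions only demand strict inequalities within each monotone side, not across sides.

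Finally I would verify that $\Phi$ is a bijection. Injectivity is immediate, since $A$ and $B$ are recovered as the letter-sets of the first $r-i$ and last $i$ positions of $uv$. For surjectivity, given any $w$ in the target union, splitting $w$ at position $r-i$ yields subsets $A, B$ of the correct sizes; the structure of a $V$- or $U$-type hook shows the two halves are exactly the $I_{supp(w)}$-decreasing and $I_{supp(w)}$-increasing arrangements of $A$ and $B$, so $\Phi(A,B) = w$. Summing over all $(A,B)$ then yields
\begin{equation*}
h_{r-i}\,e_i \;=\; \sum_{|A|=r-i,\,|B|=i} d_A\,i_B \;=\; \underline{V_i^r} + \underline{V_{i-1}^r} + \underline{U_{i-1}^r}.
\end{equation*}
The main technical obstacle is the first step: showing that $d_A$ admits a representation in the coarser order $I_S$ rather than the native $I_A$. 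This rests on a small combinatorial fact about how cyclic adjacencies within $A$ interact with the gap position defining $I_S$, and once it is in hand the case analysis and the bijection argument are entirely direct.
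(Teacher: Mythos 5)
Your proposal is correct and follows essentially the same route as the paper's proof: rewrite $d_A$ and $i_B$ as the decreasing and increasing arrangements with respect to $I_{A\cup B}$, concatenate, and sort the resulting word into $V_i^r$, $V_{i-1}^r$, or $U_{i-1}^r$ according to whether the junction letters satisfy $<$, $>$, or $=$. You spell out the two points the paper leaves implicit (why passing from $I_A, I_B$ to $I_{A\cup B}$ preserves the elements of $\mathcal{A}_k$, and why the assignment is a bijection), but the underlying argument is identical.
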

\begin{proof}
By (\ref{h_r}), (\ref{e_r}), we have 
\begin{equation*}
    h_{r-i}e_i = \sum\limits_{A \in \binom{[0,k]}{r-i},\, B \in \binom{[0,k]}{i}} d_Ai_B. 
\end{equation*}
Suppose that $d_A = A_{i_1 \dots i_{r-i}}$, $i_B = A_{i_{r-i+1} \dots i_r}$. Rearranging $(i_1, \dots, i_{r-i})$ in decreasing order, and $(i_{r-i+1}, \dots, i_r)$ in increasing order with respect to $I_{A\cup B}$ do not change $d_A$, $i_B$. So we can assume that $i_1>\dots>i_{r-i}$ and $i_{r-i+1} < \dots <i_r$ with respect to $I_{A \cup B}$. Set $u=A_{i_1\dots i_r}$. Then $u \in V_i^r, V_{i-1}^r, U_{i-1}^r$ if $i_{r-i}$ is less, more, equal $i_{r-i+1}$, respectively.
\end{proof}

\begin{lemm}\label{s_hook}
For $0 \leq i \leq r \leq k$, we have
\begin{equation}\label{s_hook_reduced}
    s_{(r-i,1^i)} = \underline{V_i^r} + \sum\limits_{j=0}^{i-1} (-1)^j \underline{U^r_{i-j-1,wc}} + \underline{U^r_{i-1,\overline{wc},\mathrm{left}}}
\end{equation}
\end{lemm}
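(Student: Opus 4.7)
The plan is to start from the defining expression (\ref{s_r_i}) for $s_{(r-i,1^i)}$, substitute the decomposition of each $h_{r-i+j}e_{i-j}$ coming from Lemma \ref{he_hook}, and then use Lemma \ref{left_right_cancel} to carry out two separate telescoping collapses: one for the $V$-type pieces and one for the $U$-type pieces.

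First I would apply Lemma \ref{he_hook} (with $i$ replaced by $i-j$) to each factor $h_{r-i+j}e_{i-j}$, yielding
\begin{equation*}
s_{(r-i,1^i)} = \sum_{j=0}^{i} (-1)^j \bigl[\underline{V^r_{i-j}} + \underline{V^r_{i-j-1}} + \underline{U^r_{i-j-1}}\bigr].
\end{equation*}
The two $V$-sums, viewed side by side, are alternating sums of consecutive $V^r_m$'s shifted by one, so after reindexing they cancel pairwise and leave only the boundary terms $\underline{V^r_i} + (-1)^i \underline{V^r_{-1}}$. By Lemma \ref{left_right_cancel}.3 the set $V^r_{-1}$ is empty, so the $V$-part collapses to $\underline{V^r_i}$. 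Likewise the term with $j=i$ in the $U$-sum contains $\underline{U^r_{-1}}$, which is empty, so the $U$-sum can be truncated to $\sum_{j=0}^{i-1}(-1)^j\underline{U^r_{i-j-1}}$.

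Next I would resolve each $\underline{U^r_m}$ by Lemma \ref{left_right_cancel}.1 into the three disjoint pieces $\underline{U^r_{m,wc}} + \underline{U^r_{m,\overline{wc},\mathrm{left}}} + \underline{U^r_{m,\overline{wc},\mathrm{right}}}$, and then use the value-preserving bijection from Lemma \ref{left_right_cancel}.2 to rewrite $\underline{U^r_{m,\overline{wc},\mathrm{right}}}$ as $\underline{U^r_{m-1,\overline{wc},\mathrm{left}}}$ inside $\mathcal{A}_k$. Substituting into the remaining alternating sum gives three alternating sums: the $wc$-part keeps its form $\sum_{j=0}^{i-1}(-1)^j\underline{U^r_{i-j-1,wc}}$, while the two "left" pieces combine into an alternating telescope $\sum_{j=0}^{i-1}(-1)^j(\underline{U^r_{i-j-1,\overline{wc},\mathrm{left}}} + \underline{U^r_{i-j-2,\overline{wc},\mathrm{left}}})$.

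Finally, the combined "left" telescope collapses exactly as in the $V$-case, leaving only $\underline{U^r_{i-1,\overline{wc},\mathrm{left}}} + (-1)^{i-1}\underline{U^r_{-1,\overline{wc},\mathrm{left}}}$; the second term vanishes by Lemma \ref{left_right_cancel}.3. Putting everything together yields (\ref{s_hook_reduced}). The main obstacle is the bookkeeping of the two telescopes and verifying that the right endpoints all fall inside the index range where $V^r_i$ or $U^r_i$ is empty; this is exactly what item 3 of Lemma \ref{left_right_cancel} was set up to guarantee, so no further combinatorial input is needed beyond careful index tracking.
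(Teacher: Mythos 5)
Your proposal is correct and follows essentially the same route as the paper: expand via (\ref{s_r_i}) and Lemma \ref{he_hook}, telescope the $V$-terms using the emptiness of $V^r_{-1}$, split each $U^r_m$ by Lemma \ref{left_right_cancel}.1, and telescope the $\overline{wc}$ pieces via the value-preserving bijection of Lemma \ref{left_right_cancel}.2. The only cosmetic difference is that the paper leaves the surviving boundary term as $(-1)^{i-1}\underline{U^r_{0,\overline{wc},\mathrm{right}}}$ rather than converting it to $(-1)^{i-1}\underline{U^r_{-1,\overline{wc},\mathrm{left}}}$; both vanish by Lemma \ref{left_right_cancel}.3.
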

\begin{proof}
We have
\begin{align*}
    s_{(r-i,1^i)} &= \sum\limits_{j=0}^i (-1)^j h_{r-i+j}e_{i-j} &\text{ by  (\ref{s_r_i})}\\
    &= \sum\limits_{j=0}^i (-1)^j (\underline{V_{i-j}^r} + \underline{V_{i-j-1}^r} + \underline{U_{i-j-1}^r}) &\text{ by Lemma \ref{he_hook} }\\
    &= \underline{V_i^r} + \sum\limits_{j=0}^{i-1} (-1)^j \underline{U_{i-j-1}^r} &\\
    &= \underline{V_i^r} + \sum\limits_{j=0}^{i-1} (-1)^j (\underline{U_{i-j-1, wc}^r} + \underline{U_{i-j-1, \overline{wc}, \mathrm{left}}^r} + \underline{U_{i-j-1, \overline{wc}, \mathrm{right}}^r})&\text{ by (\ref{underline_sum}), Lemma \ref{left_right_cancel}.1}\\
    &= \underline{V_i^r} + \sum\limits_{j=0}^{i-1} (-1)^j \underline{U_{i-j-1, wc}^r} + \underline{U_{i-1, \overline{wc}, \mathrm{left}}^r} + (-1)^{i-1} \underline{U_{0, \overline{wc}, \mathrm{right}}^r} & \text{ by Lemma \ref{left_right_cancel}.2}\\
    &= \underline{V_i^r} + \sum\limits_{j=0}^{i-1} (-1)^j \underline{U_{i-j-1, wc}^r} + \underline{U_{i-1, \overline{wc}, \mathrm{left}}^r} & \text{ by Lemma \ref{left_right_cancel}.3}. 
\end{align*}
\end{proof}

\begin{lemm}\label{p_hook}
For $1\leq r \leq k$, we have  
\begin{equation}\label{p_hook_simple}
p_r = 
    \sum\limits_{i=0}^{r-1} (-1)^i \underline{V_{i,wc}^{r}} + \sum\limits_{i=1}^{r-1} (-1)^i(r-i) \underline{U_{i-1,wc}^{r}} + \sum\limits_{i=1}^{r-2} (-1)^i \underline{U_{i-1,\overline{wc},\mathrm{left}}^{r}}.
\end{equation}
\end{lemm}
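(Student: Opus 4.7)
The plan is to substitute the cancellation-free expression for $s_{(r-i,1^i)}$ from Lemma \ref{s_hook} into the definition (\ref{p_r}) of $p_r$, and then to simplify the resulting expression using the three parts of Lemma \ref{left_right_cancel}. Writing out the substitution gives three kinds of contributions: an alternating sum of $\underline{V_i^r}$, a double alternating sum of $\underline{U_{i-j-1,wc}^r}$, and an alternating sum of $\underline{U_{i-1,\overline{wc},\mathrm{left}}^r}$. I will treat these three pieces separately.

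For the $V$-contribution, I first apply Lemma \ref{left_right_cancel}.1 to split
\[
\underline{V_i^r}=\underline{V_{i,wc}^r}+\underline{V_{i,\overline{wc},\mathrm{left}}^r}+\underline{V_{i,\overline{wc},\mathrm{right}}^r}.
\]
The $V_{i,wc}^r$ terms already match the first sum in (\ref{p_hook_simple}). For the two $\overline{wc}$ parts, the bijection in Lemma \ref{left_right_cancel}.2 yields $\underline{V_{i,\overline{wc},\mathrm{right}}^r}=\underline{V_{i-1,\overline{wc},\mathrm{left}}^r}$ in $\mathcal{A}_k$, so after an index shift the alternating signs cause the two sums to telescope. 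The only surviving boundary terms are indexed by $i=-1$ and $i=r-1$, which vanish by Lemma \ref{left_right_cancel}.3 (emptiness of $V_{-1}^r$ and of $V_{r-1,\overline{wc},\mathrm{left}}^r$). Consequently the total $V$-contribution reduces exactly to $\sum_{i=0}^{r-1}(-1)^i\underline{V_{i,wc}^r}$.

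For the $U_{wc}$-contribution I reindex by setting $m=i-j-1$ in the double sum
\[
\sum_{i=0}^{r-1}(-1)^i\sum_{j=0}^{i-1}(-1)^j\underline{U_{i-j-1,wc}^r},
\]
so that each exponent becomes $(-1)^{m+1}$, and then exchange the order of summation. For fixed $m\in\{0,\dots,r-2\}$, the inner index $i$ ranges over $\{m+1,\dots,r-1\}$, contributing a factor $r-1-m$. Relabeling $i=m+1$ produces precisely the term $\sum_{i=1}^{r-1}(-1)^i(r-i)\underline{U_{i-1,wc}^r}$ in (\ref{p_hook_simple}). Finally, for the $U_{\overline{wc},\mathrm{left}}$-contribution $\sum_{i=0}^{r-1}(-1)^i\underline{U_{i-1,\overline{wc},\mathrm{left}}^r}$, the boundary indices $i=0$ and $i=r-1$ correspond respectively to $U_{-1}^r$ and $U_{r-2,\overline{wc},\mathrm{left}}^r$, both empty by Lemma \ref{left_right_cancel}.3, leaving exactly the last sum in the target formula.

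I expect the only real subtlety to be bookkeeping: getting the sign shifts consistent in the telescoping of the $V$ parts and verifying that every discarded boundary term is genuinely one of the empty sets listed in Lemma \ref{left_right_cancel}.3. Once the three contributions are handled as above, combining them yields (\ref{p_hook_simple}) and completes the proof.
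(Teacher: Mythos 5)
Your proposal is correct and follows essentially the same route as the paper: substitute the reduced form of $s_{(r-i,1^i)}$ from Lemma \ref{s_hook} into (\ref{p_r}), telescope the $V$-part via Lemma \ref{left_right_cancel}.1--2 with the boundary terms killed by Lemma \ref{left_right_cancel}.3, collapse the double sum for the $U_{wc}$-part by the reindexing $m=i-j-1$ (the paper uses $t=i-j$, which is the same computation), and discard the empty boundary sets in the $U_{\overline{wc},\mathrm{left}}$-part. The only cosmetic difference is that you name the vanishing $V$-boundary term at $i=-1$ as $V^r_{-1,\overline{wc},\mathrm{left}}$ (empty since $X^r_i$ with $i<0$ is empty) where the paper keeps it as $V^r_{0,\overline{wc},\mathrm{right}}$; both are covered by Lemma \ref{left_right_cancel}.3.
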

\begin{proof}
We have 
\begin{align*}
    p_r &= \sum\limits_{i=0}^{r-1} (-1)^i s_{(r-i,1^i)} &\text{ by (\ref{p_r})}\\
    &= \sum\limits_{i=0}^{r-1} (-1)^i (\underline{V_i^r} + \sum\limits_{j=0}^{i-1}(-1)^j \underline{U_{i-j-1,wc}^r} + \underline{U_{i-1,\overline{wc},\mathrm{left}}^r}) &\text{ by (\ref{s_hook_reduced})} \\
    &= \sum\limits_{i=0}^{r-1}(-1)^i \underline{V_i^r} + \sum\limits_{i=0}^{r-1}\sum\limits_{j=0}^{i-1}(-1)^{i+j} \underline{U^r_{i-j-1,wc}} + \sum\limits_{i=1}^{r-1}(-1)^i \underline{U^r_{i-1,\overline{wc},\mathrm{left}}}.
\end{align*}
First
\begin{align*}
    \sum\limits_{i=0}^{r-1}(-1)^i \underline{V_i^r} &= \sum\limits_{i=0}^{r-1}(-1)^i(\underline{V^r_{i,wc}} + \underline{V^r_{i,\overline{wc},\mathrm{left}}} + \underline{V^r_{i,\overline{wc},\mathrm{right}}}) &\text{ by (\ref{underline_sum}), Lemma \ref{left_right_cancel}.1}\\
    &= \sum\limits_{i=0}^{r-1}(-1)^i \underline{V^r_{i,wc}} + \underline{V^r_{0,\overline{wc},\mathrm{right}}} + (-1)^{(r-1)}\underline{V^r_{r-1,\overline{wc},\mathrm{left}}} &\text{ by Lemma \ref{left_right_cancel}.2}\\
    &= \sum\limits_{i=0}^{r-1}(-1)^i \underline{V^r_{i,wc}} &\text{ by Lemma \ref{left_right_cancel}.3.}
\end{align*}
Second
\begin{align*}
    \sum\limits_{i=0}^{r-1}\sum\limits_{j=0}^{i-1}(-1)^{i+j} \underline{U^r_{i-j-1,wc}} &= \sum\limits_{t=1}^{r-1}\sum\limits_{j=0}^{r-t-1}(-1)^{t+2j} \underline{U^r_{t-1,wc}} &\text{ with $t=i-j$}\\
    &= \sum\limits_{t=1}^{r-1}(-1)^{t}(r-t) \underline{U^r_{t-1,wc}}.&
\end{align*}
Third
\begin{align*}
    \sum\limits_{i=1}^{r-1}(-1)^i \underline{U^r_{i-1,\overline{wc},\mathrm{left}}} &= \sum\limits_{i=1}^{r-2}(-1)^i \underline{U^r_{i-1,\overline{wc},\mathrm{left}}} &\text{ by Lemma \ref{left_right_cancel}.3.}
\end{align*}
So, the lemma is proved.
\end{proof}

\begin{Defi}
\normalfont
Let $\varphi$ be a representation of $\mathcal{A}_k$ on $\mathbb{C}[\widetilde{S}_{k+1}]$ and set $\alpha *_\varphi w = \varphi(\alpha)(w)$. A function $\psi: \mathcal{A}_{k} \times \widetilde{S}_{k+1} \rightarrow \mathbb{R}$ is said to be {\em $\varphi$-compatiable} if $\psi(\alpha\beta,w) = \psi(\alpha,\beta *_\varphi w) \psi(\beta, w)$. Fix a representation $\varphi$ and a $\varphi$-compatible function $\psi$, we define $\{\mathcal{F}_{w}^{(k)}\}_{w \in \widetilde{S}_{k+1}^0}$ to be a family of symmetric functions such that $\mathcal{F}_{id}^{(k)} = 1$ and 
\begin{align}
h_r . \mathcal{F}_w^{(k)} &= \sum\limits_{\substack{A \in \binom{[0,k]}{r} \\ d_A *_\varphi w \in \widetilde{S}_{k+1}^{0}}} \psi(d_A,w) \mathcal{F}_{d_A*_\varphi w}^{(k)}, \label{F_w^k_Pieri1}\\
e_r . \mathcal{F}_w^{(k)} &= \sum\limits_{\substack{B \in \binom{[0,k]}{r} \\ i_B *_\varphi w \in \widetilde{S}_{k+1}^{0}}} \psi(i_B,w) \mathcal{F}_{i_B*_\varphi w}^{(k)} \label{F_w^k_Pieri2},
\end{align}
for $w \in \widetilde{S}_{k+1}^0$, $0 \leq r \leq k$. Via (\ref{bijection}), we define $\mathcal{F}_\lambda^{(k)} = \mathcal{F}_{w_\lambda}^{(k)}$ for $\lambda \in \mathcal{P}_k$.
\end{Defi}

\section{A generalized Murnaghan-Nakayama rule}\label{genMN}
\begin{theo}\label{MN_Fwk} Let $\varphi$ be a representation of
$\mathcal{A}_k$ on $\mathbb{C}[\widetilde{S}_{k+1}]$ and let
$\{\mathcal{F}_w^{(k)}\}_{w \in \widetilde{S}^0_{k+1}}$ be the family
of symmetric functions whose Pieri rules are defined by
$\varphi$-compatible function $\psi$. Furthermore, suppose that $\psi(\alpha, w)$ depends only on $\widetilde{l}(\alpha)$, $\alpha *_\varphi w$, $w$, and we can write it as a function $\widetilde{\psi}$ on the three variables. Then for $1 \leq r \leq k$ and $w \in \widetilde{S}_{k+1}^{0}$, the product $p_r.\mathcal{F}_{w}^{(k)}$ is equal to
\begin{equation}\label{mn_Fwk}
     \sum\limits_{\substack{w' \in \widetilde{S}_{k+1}^{0}}} \widetilde{\psi}(r,w',w)\left( \sum\limits_{i=0}^{r-1} (-1)^i|V_{i,wc}^{r,w'}| + \sum\limits_{i=1}^{r-1} (-1)^i(r-i) |U_{i-1,wc}^{r,w'}| + \sum\limits_{i=1}^{r-2} (-1)^i |U_{i-1,\overline{wc},\mathrm{left}}^{r,w'}| \right)    \mathcal{F}_{w'}^{(k)},
\end{equation}
where $X_{i,con,side}^{r,w'}$ is the subset of all weak hook words $u$ in $X_{i,con,side}^r$ such that $u *_\varphi w = w'$.
\end{theo}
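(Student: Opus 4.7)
The plan is to apply the cancellation-free expansion of $p_r$ from Lemma \ref{p_hook} directly to $\mathcal{F}_w^{(k)}$, and to evaluate the action of each weak hook word by combining the Pieri rules (\ref{F_w^k_Pieri1}), (\ref{F_w^k_Pieri2}) with $\varphi$-compatibility of $\psi$. First I would multiply both sides of the identity in Lemma \ref{p_hook} by $\mathcal{F}_w^{(k)}$; this reduces the problem to evaluating $u \cdot \mathcal{F}_w^{(k)}$ for each weak hook word $u$ of length $r$ that appears on the right-hand side, with the signed/weighted coefficients already dictated by the lemma.

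For each such $u$ I would invoke the unique decomposition $u = d_A \cdot i_B$ implicit in the proof of Lemma \ref{he_hook}: the decreasing prefix up to and including the V-bottom (or the repeated letter, in the U case) is $d_A$, and the strictly increasing suffix is $i_B$. Iterating the Pieri rule (\ref{F_w^k_Pieri2}) on $i_B$ and then (\ref{F_w^k_Pieri1}) on $d_A$, and collapsing the two resulting factors of $\psi$ via $\varphi$-compatibility $\psi(d_A i_B,w)=\psi(d_A, i_B *_\varphi w)\psi(i_B,w)$, yields
$$u \cdot \mathcal{F}_w^{(k)} \;=\; \psi(u,w)\,\mathcal{F}_{u *_\varphi w}^{(k)},$$
with the right-hand side understood to vanish whenever $u *_\varphi w \notin \widetilde{S}_{k+1}^0$. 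The hypothesis that $\psi(\alpha,w)$ depends only on $\widetilde{l}(\alpha)$, $\alpha *_\varphi w$, and $w$ then rewrites this scalar as $\widetilde{\psi}(r, u *_\varphi w, w)$, a constant common to every weak hook word of length $r$ with the same image $w' = u *_\varphi w$.

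Finally I would group the terms by $w' \in \widetilde{S}_{k+1}^0$: factoring $\widetilde{\psi}(r,w',w)$ out of the sum and using the definition of $X^{r,w'}_{i,con,side}$ to count the weak hook words in each class contributing to a given $w'$, the signed coefficients of Lemma \ref{p_hook} translate termwise into the bracketed expression in (\ref{mn_Fwk}), proving the theorem.

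Because Lemma \ref{p_hook} has already absorbed the combinatorial cancellation, no substantial obstacle is expected to remain. The only delicate point is the iterated application of the two Pieri rules in the second paragraph: one must confirm that the Grassmannian condition is consistently handled when an intermediate product $i_B *_\varphi w$ might leave $\widetilde{S}_{k+1}^0$, even though $u *_\varphi w$ lies in it. This should follow because $\varphi$ is an honest representation of $\mathcal{A}_k$ on $\mathbb{C}[\widetilde{S}_{k+1}]$ and $\psi$ is $\varphi$-compatible, so the value of $u \cdot \mathcal{F}_w^{(k)}$ is determined by the element $u \in \mathcal{A}_k$ and does not depend on the particular factorization into $d_A$ and $i_B$ used in the intermediate step.
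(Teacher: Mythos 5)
Your proposal is correct and follows essentially the same route as the paper: the paper likewise expands $p_r.\mathcal{F}_w^{(k)}$ through the Pieri rules applied to $\sum_{i,j}(-1)^{i+j}h_{r-i+j}e_{i-j}$, collapses $\psi(d_A, i_B*_\varphi w)\,\psi(i_B,w)$ to $\widetilde{\psi}(r,(d_Ai_B)*_\varphi w, w)$ by $\varphi$-compatibility, and then substitutes the cancellation-free expression of Lemma \ref{p_hook}. The only (cosmetic) difference is the order of operations: the paper first establishes that any expression $p_r=\sum_u a_u u$ in words of the form $d_Ai_B$ yields $p_r.\mathcal{F}_w^{(k)}=\sum_{u*_\varphi w\in\widetilde{S}^0_{k+1}} a_u\,\widetilde{\psi}(r,u*_\varphi w,w)\,\mathcal{F}_{u*_\varphi w}^{(k)}$ and only then invokes Lemma \ref{p_hook}, which avoids having to define the action of an individual word on $\mathcal{F}_w^{(k)}$ up front --- precisely the delicate point you correctly flag and resolve at the end.
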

\begin{proof}
From (\ref{s_r_i}), (\ref{p_r}) we have
\begin{align}
    p_r &= \sum\limits_{i=0}^{r-1}\sum\limits_{j=0}^i (-1)^{i+j}h_{r-i+j}e_{i-j} \\
    &= \sum\limits_{i=0}^{r-1}\sum\limits_{j=0}^i\sum\limits_{\substack{A \in \binom{[0,k]}{r-i+j} \\ B \in \binom{[0,k]}{i-j}}}  (-1)^{i+j} d_Ai_B.\label{pr}
\end{align}
By (\ref{F_w^k_Pieri1}), (\ref{F_w^k_Pieri2}) we have
\begin{align}
    p_r.\mathcal{F}_w^{(k)} &= \sum\limits_{i=0}^{r-1}\sum\limits_{j=0}^i \sum\limits_{\substack{A \in \binom{[0,k]}{r-i+j} \\ d_A*_\varphi (i_B*_\varphi w) \in \widetilde{S}_{k+1}^{0}}} 
    \sum\limits_{\substack{B \in \binom{[0,k]}{i-j} \\  i_B*_\varphi w \in \widetilde{S}_{k+1}^{0}}} (-1)^{i+j}
    \psi(d_A,i_B*_\varphi w) \psi(i_B,w) \mathcal{F}_{d_A*_\varphi (i_B*_\varphi w)}^{(k)} \\
    &= \sum\limits_{i=0}^{r-1}\sum\limits_{j=0}^i \sum\limits_{\substack{
    A \in \binom{[0,k]}{r-i+j}\\
    B \in \binom{[0,k]}{i-j}\\
    (d_Ai_B)*_\varphi w \in \widetilde{S}^0_{k+1} 
    }} (-1)^{i+j}
    \psi(d_Ai_B,w) \mathcal{F}_{(d_Ai_B)*_\varphi w}^{(k)}\\
    &= \sum\limits_{i=0}^{r-1}\sum\limits_{j=0}^i \sum\limits_{\substack{
    A \in \binom{[0,k]}{r-i+j}\\
    B \in \binom{[0,k]}{i-j}\\
    (d_Ai_B)*_\varphi w \in \widetilde{S}^0_{k+1} 
    }} (-1)^{i+j}
    \widetilde{\psi}(r,(d_Ai_B)*_\varphi w, w) \mathcal{F}_{(d_Ai_B)*_\varphi w}^{(k)}.\label{prFw}
\end{align}
The identities (\ref{pr}), (\ref{prFw}) tell us that if $p_r = \sum\limits_u a_u u$ with $a_u \in \mathbb{C}$, $u$ has form $d_Ai_B$, then
\begin{equation}\label{prFw_reduced}
    p_r.\mathcal{F}_w^{(k)}=\sum\limits_{u*_\varphi w \in \widetilde{S}^0_{k+1}} \widetilde{\psi}(r,u*_\varphi w,w) a_u \mathcal{F}^{(k)}_{u *_\varphi w}.
\end{equation}
So the conclusion (\ref{mn_Fwk}) follows directly from (\ref{prFw_reduced}), Lemma \ref{p_hook}.
\end{proof}

\section{Murnaghan-Nakayama rule for \texorpdfstring{$K$-$k$-}.Schur functions}\label{MNKk}
\begin{Defi}
\normalfont
We consider {\em star representation} of $\mathcal{A}_k$ on $\mathbb{C}[\widetilde{S}_{k+1}]$ \cite{takigiku2019pieri, morse2012combinatorics} 
\begin{equation}\label{Demazure_rep}
    \eta(A_i)(w)=A_i * w =\begin{cases}
    s_iw &\text{ if }l(s_iw)>l(w),\\
    w &\text{ if }l(s_iw)<l(w).  
    \end{cases}
\end{equation}
Define $\psi: \mathcal{A}_k \times \widetilde{S}_{k+1} \rightarrow \mathbb{R}$ by $\psi(\alpha, w) = (-1)^{\widetilde{l}(\alpha)-l(\alpha * w)+l(w)}$. Then $\psi$ is $\eta$-compatible. The family $\{\mathcal{F}_w^{(k)}\}_{w \in \widetilde{S}^0_{k+1}}$ in this case coincides with the family of $K$-$k$-Schur functions $\{g_{w}^{(k)}\}_{w \in \widetilde{S}^0_{k+1}}$ \cite{takigiku2019pieri, morse2012combinatorics}. Since $\{\eta(A_i)\}_{i\in \mathbb{Z}/(k+1)\mathbb{Z}}$ satisfy relations $\eta(A_i)^2 = \eta(A_i)$, and (\ref{R2}), (\ref{R3}) (replace $s_i$ by $\eta(A_i)$), $\eta$ well defines a representation of $\mathcal{H}_k$ on $\mathbb{C}[\widetilde{S}_{k+1}]$ 
\begin{equation*}\label{Demazure_rep_induce}
    H_i = A_i J_k \mapsto -\eta(A_i).
\end{equation*}
\end{Defi}
Theorem \ref{MN_Fwk} taken from $\mathcal{H}_k$ gives us the Murnaghan-Nakayama rule for $K$-$k$-Schur functions.
\begin{coro}\label{MN_gwk}
For $1 \leq r \leq k$ and $w \in \widetilde{S}_{k+1}^{0}$, the product $p_r.g_{w}^{(k)}$ is equal to 
\begin{equation}\label{mn_gwk}
    \sum\limits_{\substack{w' \in \widetilde{S}_{k+1}^{0}}} (-1)^{r-l(w')+l(w)} \left( \sum\limits_{i=0}^{r-1} (-1)^i|V_{i,wc}^{r,w'}| + \sum\limits_{i=1}^{r-1} (-1)^i(r-i) |U_{i-1,wc}^{r,w'}| + \sum\limits_{i=1}^{r-2} (-1)^i |U_{i-1,\overline{wc},\mathrm{left}}^{r,w'}| \right) g_{w'}^{(k)}.
\end{equation}
\end{coro}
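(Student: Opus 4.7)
The plan is to deduce Corollary \ref{MN_gwk} as a direct specialization of Theorem \ref{MN_Fwk} to the representation $\varphi = \eta$ and the function $\psi(\alpha,w) = (-1)^{\widetilde{l}(\alpha)-l(\alpha*w)+l(w)}$ introduced in the definition block immediately preceding the corollary. Since the family $\{\mathcal{F}_w^{(k)}\}$ determined by this choice coincides with the $K$-$k$-Schur functions $\{g_w^{(k)}\}$, the formula produced by the theorem is exactly a Murnaghan--Nakayama expansion of $p_r . g_w^{(k)}$. My task therefore reduces to verifying that the hypotheses of Theorem \ref{MN_Fwk} hold in this setting and then reading off the resulting identity.

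First I would confirm the $\eta$-compatibility of $\psi$, which the definition block asserts but does not prove. Using $\widetilde{l}(\alpha\beta) = \widetilde{l}(\alpha)+\widetilde{l}(\beta)$ and $(\alpha\beta)*_\eta w = \alpha *_\eta(\beta *_\eta w)$, the exponent of $\psi(\alpha\beta,w)$ becomes
\[
\widetilde{l}(\alpha)+\widetilde{l}(\beta)-l\bigl(\alpha *_\eta(\beta *_\eta w)\bigr)+l(w),
\]
which matches the sum of exponents in $\psi(\alpha,\beta *_\eta w)\psi(\beta,w)$ after cancelling the intermediate term $l(\beta *_\eta w)$. Next I would check that $\psi(\alpha,w)$ depends only on $\widetilde{l}(\alpha)$, on $\alpha *_\eta w$ (through its length), and on $w$; setting $r = \widetilde{l}(\alpha)$ and $w' = \alpha *_\eta w$, I can therefore define
\[
\widetilde{\psi}(r,w',w) := (-1)^{r - l(w') + l(w)},
\]
so that $\psi(\alpha,w) = \widetilde{\psi}\bigl(\widetilde{l}(\alpha),\alpha *_\eta w,w\bigr)$. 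This is the precise hypothesis required by Theorem \ref{MN_Fwk}.

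With both conditions in place, substituting this $\widetilde{\psi}$ into formula (\ref{mn_Fwk}) gives the expansion (\ref{mn_gwk}). Crucially, $\widetilde{\psi}(r,w',w)$ is constant on the set of weak hook words $u$ with $u *_\eta w = w'$, so the sign $(-1)^{r-l(w')+l(w)}$ factors cleanly out of the inner sum over cardinalities $|X_{i,con,side}^{r,w'}|$. No additional combinatorial input is needed; the entire corollary is a specialization of the general theorem. The only real bookkeeping is the sign check in the compatibility of $\psi$, which is routine once the exponent is written in the symmetric form above, so I do not anticipate a serious obstacle in carrying out this proof.
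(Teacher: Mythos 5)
Your proposal is correct and follows essentially the same route as the paper, which obtains the corollary by specializing Theorem \ref{MN_Fwk} to the star representation $\eta$ and $\psi(\alpha,w)=(-1)^{\widetilde{l}(\alpha)-l(\alpha*w)+l(w)}$, yielding $\widetilde{\psi}(r,w',w)=(-1)^{r-l(w')+l(w)}$. Your explicit verification of the $\eta$-compatibility and of the dependence hypothesis simply fills in details the paper asserts without proof.
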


\begin{Defi}
\normalfont
Suppose that the diagram (\ref{bijection}) applying to $w \in \widetilde{S}^{0}_{k+1}$ is $
    w \mapsto \kappa \mapsto \lambda$. 
To translate Corollary \ref{MN_gwk} to $\{g_{\lambda}^{(k)}\}_{\lambda \in \mathcal{P}_k}$, we need a translation of (\ref{Demazure_rep}) to $\mathbb{C}[\mathcal{C}_{k+1}]$ 
\begin{equation}\label{Hikappa}
    A_i*\kappa = \begin{cases}
    \kappa \text{ with all $\kappa$-addable corners of $(k+1)$-residue $i$ added if they exist,}\\
    
    \kappa \text{ if there exists $\kappa$-removable corners of $(k+1)$-residue $i$,}\\
    
    0 \text{ otherwise,}
    \end{cases}
\end{equation}
and then to $\mathbb{C}[\mathcal{P}_k]$
\begin{equation}\label{Hilambda}
    A_i*\lambda = \begin{cases}
    \lambda + e_t \text{ where $t$ is the lowest row of $A_i.\kappa/\kappa$ if $A_i.\kappa \supsetneq \kappa$,} \\
    \lambda \text{ if } A_i*\kappa = \kappa,\\
    0 \text{ otherwise.}
    \end{cases}
\end{equation}
The actions seem not to be mentioned precisely in \cite{takigiku2019pieri, morse2012combinatorics}, but they are simple adaptations of \cite[Definition 18, Proposition 22]{lapointe2005tableaux} from $s_i$ to $A_i$, or of \cite[Section 4.1]{morse2012combinatorics} from $\mathfrak{s}_i$ to $A_i$. In (\ref{Hikappa}), the second line, we can view the image $\kappa$ as $\kappa$ with all $\kappa$-removable corners of residue $i$ absorbed. And in (\ref{Hilambda}), the second line, we can view the image $\lambda$ as $\lambda$ with the last cell in the row $t$ of $\lambda$ absorbed. 
\end{Defi}

\begin{rema}\label{Ob}
\normalfont 
We observe the process $u$ acting on $\kappa$, when $u=A_{i_1\dots i_r}$ is a weak hook word with $$i_1>\dots>i_j \leq i_{j+1} < \dots<i_r.$$ 
Suppose that $u*\kappa \ne 0$, we can see:
\begin{itemize}
    \item[Ob1.] By (\ref{Hikappa}), if $A_i *\kappa \ne 0$, there must be some $\kappa$-addable or $\kappa$-removable corners of residue $i$.
    \item[Ob2.] When we apply $A_iA_{i+1}$ on $\kappa$, if one cell of residue $i$ is added, then it must appear directly below one $\kappa$-addable or $\kappa$-removable corner of residue $i+1$. 
    \item[Ob3.] Set $V=A_{i_{j+1} \dots i_r}$, then by Ob2, $V*\kappa/\kappa$ must be a vertical strip. 
    \item[Ob4.] Set $\kappa^V = V*\kappa$. Similar to Ob2, when we apply $A_{j+1}A_j$ on $\kappa^V$, if one cell of residue $j+1$ is added, then it must appear directly to the right of one $\kappa^V$-addable or $\kappa^V$-removable corner of residue $j$. 
    \item[Ob5.] Set $H=A_{i_1 \dots i_j}$, then by Ob4, $H*\kappa^V/\kappa^V$ must be a horizontal strip.
    \item[Ob6.] By Ob3 and Ob5, $u*\kappa/\kappa$ must be a ribbon.
    \item[Ob7.] By (\ref{Hikappa}), $supp(u)=supp(u*\kappa/\kappa) \cup \{ \text{residues of some $\kappa$-removable corners}\}$.
\end{itemize}
\end{rema}

\begin{rema}\label{ob}
\normalfont 
Observe the process $u$ acting on $\lambda$, suppose that $u*\lambda \ne 0$, we can see:
\begin{itemize}
    \item[ob1.] Let $V=V^{(p)} \dots V^{(1)}$ be the factorization of $V$ into maximal segments of consecutive indices. Then for $1\leq p'\leq p$, the word $V^{(p')}$ adds a connected vertical strip, say $\mathcal{S}^{(p')}$, to $V^{(p'-1)}\dots V^{(1)}*\lambda$. The strips are disjoint from each other. The reason is the same as \cite[Corollary 4.3]{bandlow2011murnaghan}, with a remark that the last letter of $V^{(p')}$ may add or absorb one cell to $V^{(p'-1)}\dots V^{(1)}*\lambda$. Let $\mathcal{E}^{(p')}$ be the set of cells absorbed to $V^{(p'-1)}\dots V^{(1)}*\lambda$ (at most one cell). Then $|\mathcal{S}^{(p')}|+|\mathcal{E}^{(p')}|=\widetilde{l}(V^{(p')})$.
    \item[ob2.] By ob1, we have 
    \begin{align*}
        ht(V*\lambda/\lambda) &= \sum\limits_{p' \text{ s.t. }\mathcal{S}^{(p')} \ne \emptyset}|\mathcal{S}^{(p')}|-1,\\
        &=\sum\limits_{p' \text{ s.t. }\mathcal{S}^{(p')} \ne \emptyset} \widetilde{l}(V^{(p')}) - |\mathcal{E}^{(p')}|-1\\
        &=\sum\limits_{p' \text{ s.t. }\mathcal{S}^{(p')} \ne \emptyset} \widetilde{l}(V^{(p')}) - |\mathcal{E}^{(p')}|-1 + \sum\limits_{p' \text{ s.t. }\mathcal{S}^{(p')} = \emptyset} \widetilde{l}(V^{(p')}) - |\mathcal{E}^{(p')}|\\
        &= \widetilde{l}(V) - \sum\limits_{p' \text{ s.t. }\mathcal{S}^{(p')} \ne \emptyset } |\mathcal{E}^{(p')}|- \sum\limits_{p' \text{ s.t. }\mathcal{S}^{(p')} = \emptyset } |\mathcal{E}^{(p')}| - \sum\limits_{p' \text{ s.t. }\mathcal{S}^{(p')} \ne \emptyset} 1\\
        &=\widetilde{l}(V) - \sum\limits_{p' \text{ s.t. }\mathcal{S}^{(p')} \ne \emptyset } |\mathcal{E}^{(p')}| - \sum\limits_{p' \text{ s.t. }\mathcal{S}^{(p')} = \emptyset}1  -  \sum\limits_{p' \text{ s.t. }\mathcal{S}^{(p')} \ne \emptyset} 1 
        \intertext{because if $\mathcal{S}^{(p')} = \emptyset$, then $|\mathcal{E}^{(p')}|=1$,}
        &= \widetilde{l}(V) - p- \sum\limits_{p' \text{ s.t. }\mathcal{S}^{(p')} \ne \emptyset } |\mathcal{E}^{(p')}|.
    \end{align*} 
    So \begin{equation}\label{htV_lambda_lambda}
        ht(V*\lambda/\lambda) \leq \widetilde{l}(V) - p.
    \end{equation}
    The equality in (\ref{htV_lambda_lambda}) happens if and only if $\mathcal{E}^{(p')} = \emptyset$ for all $p'$ such that $\mathcal{S}^{(p')} \ne \emptyset$.
    \item[ob3.] When applying $H$ on $V*\lambda$, the cells absorbed or added to $\lambda$ form a horizontal strip by Ob5, (\ref{Hilambda}). So, a letter $A_i$ of $H$ increases $ht(V*\lambda/\lambda)$ if and only if $A_{i+1}$ is a first letter of some word $V^{(p')}$. Denote $i^{(p')}$ the first index of $V^{(p')}$, then
    \begin{align*}
        ht(\mu/\lambda) &= ht(V*\lambda/\lambda) + \sum\limits_{p' \text{ s.t }i^{(p')}-1 \text{ is an index of }H} 1.
    \end{align*}
So, we have
    \begin{equation}\label{ht_mu_lambda}
        ht(\mu/\lambda) \leq ht(V*\lambda/\lambda)+p\\
        \leq \widetilde{l}(V).
    \end{equation}
    Both inequalities in (\ref{ht_mu_lambda}) become equalities if and only if
    \begin{itemize}
        \item[1.] $i^{(p')}-1$ is an index of $H$ for all $p'$, and
        \item[2.] $\mathcal{E}^{(p')} = \emptyset$ for all $p'$ such that $\mathcal{S}^{(p')} \ne \emptyset$.
    \end{itemize}
    The first condition implies that the hook type of $u$ must be $V$. Hence, there are some particular cases we do not have equality in (\ref{ht_mu_lambda})
    \begin{itemize}
        \item[1.] the hook type of $u$ is $U$, or
        \item[2.] there is $p'$ such that $i^{(p')}-1$ is not an index of $H$, or
        \item[3.] there is $p'$ such that $\mathcal{S}^{(p')} \ne \emptyset$, $\mathcal{E}^{(p')} \ne \emptyset$.
    \end{itemize}
    In particular, we have equality in (\ref{ht_mu_lambda}) for a word satisfying conditions
    \begin{itemize}
        \item[1.] it has hook type $V$, is a $k$-connected word, and 
        \item[2.] $\mathcal{E}^{(p')} = \emptyset$ for all $p'$.
    \end{itemize}
    \item[ob4.] Similar to ob3, we want to think $\widetilde{l}(H)$ as height of a skew shape. Let $\overline{H},\overline{V}$ be $H,V$ after replacing index $i$ by $\overline{i}=k+1-i$, respectively. From a trivial fact that $A_i*\lambda = \mu$ if and only if $A_{\overline{i}}*\lambda^{(k)} = \mu^{(k)}$, we have 
    \begin{equation*}
        \overline{H}\,\overline{V}*\lambda^{(k)}=\mu^{(k)}.
    \end{equation*}
    We can see that $\overline{H}\,\overline{V}$ is an anti-weak hook word. Let $\overline{u}$ be the reduced weak hook word constructed from $\overline{H}\,\overline{V}$ by Lemma \ref{to_hook_word}, then $\overline{u}$ equals $\overline{H}\,\overline{V}$ in $\mathcal{H}_k$, and \begin{equation*}
        \widetilde{l}(H)=\widetilde{l}(\overline{H})=\widetilde{l}(V')+1, 
    \end{equation*}
    where $V'$ is the subword of $\overline{u}$ defined as in Ob3.
    By (\ref{ht_mu_lambda}), we have
    \begin{equation}\label{ht_mut_lambdat}
        ht(\mu^{(k)}/\lambda^{(k)}) \leq \widetilde{l}(V') = \widetilde{l}(H) -1.
    \end{equation}
    \item[ob5.] By (\ref{ht_mu_lambda}), (\ref{ht_mut_lambdat}), we have $ht(\mu/\lambda) + ht(\mu^{(k)}/\lambda^{(k)}) \leq r-1$. In particular, we have the equality for a word satisfying conditions
    \begin{itemize}
        \item[1.]it has hook type $V$, is a $k$-connected word, and
        \item[2.]all $\mathcal{E}^{(p')} = \emptyset$, for both parts $V$ of $u$ and $V'$ of $\overline{u}$. 
    \end{itemize}
\end{itemize}
\end{rema}

\begin{coro}\label{MN_gwk_partition}
For $1 \leq r \leq k$ and $\lambda \in \mathcal{P}_k$, the product $p_r.g_{\lambda}^{(k)}$ is equal to 
\begin{equation}\label{mn_gwk_partition}
    \sum\limits_{\mu} (-1)^{r-|\mu|+|\lambda|} \left( \sum\limits_{i=0}^{r-1} (-1)^i|V_{i,wc}^{r,\mu}| + \sum\limits_{i=1}^{r-1} (-1)^i(r-i) |U_{i-1,wc}^{r,\mu}| + \sum\limits_{i=1}^{r-2} (-1)^i |U_{i-1,\overline{wc},\mathrm{left}}^{r,\mu}| \right) g_{\mu}^{(k)},
\end{equation}
where the sum runs over $\mu \in \mathcal{P}_k$ such that 
\begin{itemize}
    \item[(K0)] \label{K0}$\lambda \subset \mu$, $\lambda^{(k)} \subset \mu^{(k)}$,
    \item[(K1)] \label{K1}$|\mu/\lambda| \leq r$,
    \item[(K2)] \label{K2}$\mathfrak{p}^{-1}(\mu)/\mathfrak{p}^{-1}(\lambda)$ is a ribbon,
    \item[(K3)] \label{K3}$supp(\mathfrak{p}^{-1}(\mu)/\mathfrak{p}^{-1}(\lambda))$ is $k$-connected, or $|supp(\mathfrak{p}^{-1}(\mu)/\mathfrak{p}^{-1}(\lambda))| <r$,  
    \item[(K4)] \label{K4}$ht(\mu/\lambda) + ht(\mu^{(k)}/\lambda^{(k)}) < r$.
\end{itemize}
\end{coro}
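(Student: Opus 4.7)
The plan is to transport Corollary \ref{MN_gwk} across the bijection $\phi:\mathcal{P}_k \to \widetilde{S}^0_{k+1}$ and then identify the partitions $\mu \in \mathcal{P}_k$ that can contribute with nonzero coefficient. For the transport itself, since $\phi(\lambda)=w_\lambda$ is obtained by reading the $|\lambda|$ residues of the cells of $\lambda$ and this reading is a reduced word, we have $l(w_\lambda)=|\lambda|$. Thus the sign $(-1)^{r-l(w')+l(w)}$ becomes $(-1)^{r-|\mu|+|\lambda|}$, and the sets $X^{r,w'}_{i,con,side}$ translate to the sets $X^{r,\mu}_{i,con,side}$ of weak hook words $u$ with $\widetilde{l}(u)=r$, specified ascent and hook type, and $u*\lambda=\mu$ under the action (\ref{Hilambda}). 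This gives formula (\ref{mn_gwk_partition}) with the sum a priori over all $\mu \in \mathcal{P}_k$.

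The task is then to verify that whenever some $X^{r,\mu}_{i,con,side}$ is nonempty, $\mu$ satisfies (K0)--(K4); all other $\mu$ contribute zero and can be dropped from the sum. Each condition falls out of Remarks \ref{Ob} and \ref{ob} applied to any contributing $u$. Condition (K0) holds because each generator acts on $\kappa=\mathfrak{p}^{-1}(\lambda)$ by adding boxes or fixing $\kappa$, yielding $\lambda \subset \mu$; the $k$-conjugate inclusion $\lambda^{(k)} \subset \mu^{(k)}$ follows from the symmetry $A_i*\lambda=\mu \Leftrightarrow A_{\overline{i}}*\lambda^{(k)}=\mu^{(k)}$ used in ob4. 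Condition (K1) is immediate since each of the $r$ letters of $u$ adds at most one cell. Condition (K2) is exactly Ob6. For (K4), observation ob5 gives $ht(\mu/\lambda)+ht(\mu^{(k)}/\lambda^{(k)}) \leq r-1 < r$.

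The slightly subtler condition is (K3). A contributing $u$ lies in one of $V^r_{i,wc}$, $U^r_{i-1,wc}$, $U^r_{i-1,\overline{wc},\mathrm{left}}$. By Ob7, $supp(\mathfrak{p}^{-1}(\mu)/\mathfrak{p}^{-1}(\lambda)) \subseteq supp(u)$. If the ribbon support has size $r$, this forces $|supp(u)|=r$, so all letters of $u$ are distinct; this rules out the $\overline{wc}$ case (whose definition requires a repeated letter at $u_{min}$) and also case (ii.3) inside $wc$, leaving only case (ii.1), namely $u$ is $k$-connected. Hence the ribbon support is also $k$-connected, giving (K3).

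The main obstacle, as I see it, is the careful bookkeeping for (K3) and (K4), where the internal hook-word data (side of $u_{min}$, multiplicity of $u_{min}$, split into $H$ and $V$) must be translated into support and height conditions on the ribbon $\mathfrak{p}^{-1}(\mu)/\mathfrak{p}^{-1}(\lambda)$. In particular, (K4) relies on both bound (\ref{ht_mu_lambda}) applied to the right side $V$ of $u$ and its $k$-conjugate analogue (\ref{ht_mut_lambdat}) for the left side $H$, the latter obtained by running Lemma \ref{to_hook_word} on the anti-weak hook word $\overline{H}\,\overline{V}$. Once all of this machinery is in place, the corollary is a direct assembly of Remarks \ref{Ob} and \ref{ob} on top of Corollary \ref{MN_gwk}.
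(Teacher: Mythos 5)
Your overall route is the paper's: transport Corollary \ref{MN_gwk} through $\phi$ using $l(w_\lambda)=|\lambda|$, then read off (K0)--(K2) from (\ref{Hilambda}) and Ob6, (K4) from ob5, and (K3) from Ob7 together with a case analysis on which sets of (\ref{mn_gwk_partition}) a contributing word $u$ can lie in. Your treatment of (K0), (K1), (K2) and (K4) matches the paper's proof.

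There is, however, one wrong step in your (K3) argument. You rule out the $\overline{wc}$ case on the grounds that its ``definition requires a repeated letter at $u_{min}$.'' That is not the definition: $\overline{wc}$ (not $k$-weak connected) is case (ii.2), in which $u_{min}$ occurs exactly \emph{once}; it is case (ii.3) --- which belongs to $wc$, not to $\overline{wc}$ --- that has $u_{min}$ occurring twice. So distinctness of the letters of $u$ does not by itself exclude (ii.2), and as written your implication does not establish the claim. The correct way to dispose of this case, and the one the paper uses, is to note that the only $\overline{wc}$ sets occurring in (\ref{mn_gwk_partition}) are the $U^{r,\mu}_{i-1,\overline{wc},\mathrm{left}}$, i.e.\ they have hook type $U$; hook type $U$ forces the repetition $i_j=i_{j+1}$, hence $|supp(u)|<r$, contradicting $|supp(u)|=r$. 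With that substitution your case analysis closes correctly: (ii.3) is indeed excluded by the double occurrence of $u_{min}$, leaving (ii.1), so the ribbon support equals $supp(u)$ and is $k$-connected. The rest of the proposal is sound.
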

\begin{proof}
(K0), (K1) follow directly from (\ref{Hilambda}). (K2) is Ob6. (K3) follows directly from Ob7 and the fact that $\mathfrak{p}^{-1}(\mu)=u*\kappa$ where $\kappa = \mathfrak{p}^{-1}(\lambda)$, with $u$ taken from the sets in (\ref{mn_gwk_partition}). Indeed, by Ob7,
\begin{itemize}
    \item[1.] If $supp(\mathfrak{p}^{-1}(\mu)/\mathfrak{p}^{-1}(\lambda)) \subsetneq supp(u)$, then its cardinality is less than $r$. 
    \item[2.] If $supp(\mathfrak{p}^{-1}(\mu)/\mathfrak{p}^{-1}(\lambda)) = supp(u)$, by (\ref{mn_gwk_partition}), there are some possibilities:
    \begin{itemize}
        \item $u$ is $k$-connected.
        \item $u$ is not $k$-connected, $u_{min}$ is on both side of hook form. Then $|supp(u)|<r$. 
        \item $u$ is not $k$-connected, $u_{min}$ is on only one side of hook form. Then $u$ must have hook type $U$, and so $|supp(u)|<r$.
    \end{itemize}
    
\end{itemize}

(K4) is ob5.
\end{proof}

\begin{rema}\label{Ob'}
\normalfont
Suppose that we know $\pi=u*\kappa \ne 0$, we can see:
\begin{itemize}
    \item[Ob1'.] By Ob4, let $c$ be a cell in $\pi/\kappa$ so that the cell directly on its left-hand still belongs to $\pi/\kappa$ and $c$ has residue $j$, then $j$ must be an index of $H$. 
    \item[Ob2'.] Among $\kappa^V$-removable corners of residue $j$, if there exists a cell so that the cell directly below does not belong to $\pi/\kappa^V$, then $j$ can be an index of $H$. Contrary, if there is no such cell, $j$ can not be an index of $H$. Indeed, if $j$ is an index of $H$ and there is no such cell, then $j-1$ must be an index of $H$ and appear before $j$ in $H$. It contradicts the fact that indices of $H$ are decreasing.
    \item[Ob3'.] By Ob3, let $c'$ be a cell in $\pi/\kappa$ so that the cell directly below still belong to $\pi/\kappa$ and $c'$ has residue $i+1$, then $i+1$ must be an index of $V$.
    \item[Ob4'.] Among $\kappa$-removable corners of residue $i$, if there exists a cell so that the cell directly on its right-hand does not belong to $\kappa^V/\kappa$, then $i$ can be an index of $V$. Contrary, if there is no such cell, $i$ can not be an index of $V$. Indeed, if $i$ is an index of $V$ and there is no such cell, then $i+1$ must be an index of $V$ and appear before $i$ in $V$. It contradicts the fact that indices of $V$ are increasing.
\end{itemize}
In the picture below, let $\kappa$ be the white partition bounded by black and green bold lines. Let $\kappa^V$ be the partition bounded by green bold lines. Let $\pi$ be the biggest partition. The cells in Ob1', Ob2', Ob3', Ob4' are marked 1' (yellow), 2', 3' (green), 4', respectively.  
$$
\hbox{
\begin{tikzpicture}
\draw (0,0)--++(11/2,0)--++(0,-1/2)--++(-3/2,0)--++(0,-3/2)--++(-1/2,0)--++(0,-2/2)--++(-2/2,0)--++(0,-4/2)--++(-3/2,0)--++(0,-3/2)--++(-2/2,0)--++(0,13/2);
\path[fill=Green] (7/2,-1/2)--++(1/2,0)--++(0,-2/2)--++(-1/2,0)--++(0,2/2);
\path[fill=Green] (4/2,-5/2)--++(1/2,0)--++(0,-4/2)--++(-1/2,0)--++(0,4/2);
\path[fill=Green] (1/2,-9/2)--++(1/2,0)--++(0,-3/2)--++(-1/2,0)--++(0,3/2);
\path[fill=yellow] (9/2,0)--++(2/2,0)--++(0,-1/2)--++(-2/2,0)--++(0,1/2);
\path[fill=yellow] (5/2,-5/2)--++(2/2,0)--++(0,-1/2)--++(-2/2,0)--++(0,1/2);
\path[fill=yellow] (2/2,-9/2)--++(3/2,0)--++(0,-1/2)--++(-3/2,0)--++(0,1/2);
\draw[line width=1.3pt](0,0)--++(8/2,0)--++(0,-1/2)--++(-1/2,0)--++(0,-4/2)--++(-3/2,0)--++(0,-4/2)--++(-3/2,0)--++(0,-4/2)--++(-1/2,0)--++(0,13/2);
\draw[line width=2pt, color = OliveGreen] (0,0)--++(8/2,0)--++(0,-4/2)--++(-1/2,0)--++(0,-1/2)--++(-2/2,0)--++(0,-4/2)--++(-3/2,0)--++(0,-4/2)--++(-2/2,0)--++(0,13/2);
\draw (0,0) node[below=1.3cm, right=2cm]{$\kappa$};
\draw (10/2,-1/4) node{$1'$};
\draw (3,-11/4) node{$1'$};
\draw (7/4,-19/4) node{$1'$};
\draw (15/4,-7/4) node{$2'$};
\draw (3/4,-25/4) node{$2'$};
\draw (15/4,-1) node{$3'$};
\draw (3/4,-21/4) node{$3'$};
\draw (9/4,-7/2) node{$3'$};
\draw (15/4,-1/4) node{$4'$};
\draw (13/4,-9/4) node{$4'$};
\end{tikzpicture}}
$$

\end{rema}
\begin{Defi}\label{Klambdamur}
\normalfont
For $\lambda,\mu$ satisfying the conditions in Corollary \ref{MN_gwk_partition}, we give a way to find the sets which contribute to (\ref{mn_gwk_partition}). It is based on Remark \ref{Ob'}. Namely, set $\kappa = \mathfrak{p}^{-1}(\lambda)$ and $\pi = \mathfrak{p}^{-1}(\mu)$, we construct a set $K_{\lambda,\mu}^r$ as follows.
\begin{itemize}
\item[Kk1.] Let $\mathsf{pre.must.down.cell}$ be the set of all cells $(i, j) \in \pi/\kappa$ such that $(i,j-1) \in \pi/\kappa$. The set of all $(k+1)$-residues of the cells in $\mathsf{pre.must.down.cell}$ is denoted by $\mathsf{must.down.residue}$. Let $\mathsf{must.down.cell}$ be the set of cells in $$ \pi/(d_{\mathsf{must.down.residue}})^{-1}*\pi.$$

\item[Kk2.] Let $\mathsf{pre.must.up.cell}$ be the set of all cells 
$(i, j) \in \pi/\kappa$ such that $(i+1,j) \in \pi/\kappa$. The set of all $(k+1)$-residues of the cells in $\mathsf{pre.must.up.cell}$ is denoted by $\mathsf{must.up.residue}$. Let $\mathsf{must.up.cell}$ be the set of cells in $$ i_{\mathsf{must.up.residue}}*\kappa/\kappa.$$

\item[Kk3.] Let $\mathsf{up.or.down.residue}$ be the set of all $(k+1)$-residues of the cells in $$(d_{\mathsf{must.down.residue}})^{-1}*\pi/i_{\mathsf{must.up.residue}}*\kappa.$$
Let $\mathsf{up.choice}$ be a subset of $\mathsf{up.or.down.residue}$. For each $\mathsf{up.choice}$, set \begin{equation*}
\mathcal{V} = \mathsf{must.up.residue} \cup \mathsf{up.choice}. 
\end{equation*}

\item[Kk3'.]For each $\mathcal{V}$ in step Kk3, let $\mathsf{absorb.cell.up}$ be the set of all $\kappa$-removable corners $(i,j)$ such that $(i, j+1) \not\in i_{\mathcal{V}}*\kappa/\kappa$. Set $\mathsf{absorb.residue.up}$ to be the set of all residues of the cells in $\mathsf{absorb.cell.up}$. For each subset $\mathsf{absorb.up.choice}$ of $\mathsf{absorb.residue.up}$ such that 
\begin{equation}\label{ine_less_r}
    |\mathsf{absorb.up.choice}| + |supp(\pi/\kappa)| \leq r,
\end{equation} set $V = i_{(\mathcal{V} \cup \mathsf{absorb.up.choice})}$.

\item[Kk4.] We have $V*\kappa = i_{\mathcal{V}}*\kappa$. It does not depend on the choice of $\mathsf{absorb.up.choice}$ in step Kk3'. 

\item[Kk5.] For each $V$ in step Kk3', let $\mathcal{H}$ be $supp(\pi/V*\kappa)$. It is exactly the union of $\mathsf{must.down.residue}$ with the complement of $\mathsf{up.choice}$ in $\mathsf{up.or.down.residue}$.

\item[Kk5'.] For each $V$ in step Kk3', let $\mathsf{absorb.cell.down}$ be the set of all $V*\kappa$-removable corners $(i,j)$ such that $(i+1,j) \not\in \pi/V*\kappa$. Set $\mathsf{absorb.residue.down}$ to be the set of all residues of the cells in $\mathsf{absorb.cell.down}$. For each subset $\mathsf{absorb.down.choice}$ of $\mathsf{absorb.residue.down}$ such that
\begin{equation}\label{equ_r}
    |\mathsf{absorb.down.choice}|+|\mathcal{H}|+|\mathcal{V}|+ |\mathsf{absorb.up.choice}| = r,
\end{equation} set $H = d_{(\mathcal{H} \cup \mathsf{absorb.down.choice})}$.

\item[Kk6.]
For each $H$ in step Kk5', set $u=HV$, rearrange indices of $H$ in decreasing order, of $V$ in increasing order with respect to $I_{supp(u)}$ and then add it to $K_{\lambda,\mu}^r$. This process does not change the value of $u$ in $\mathcal{A}_k$, but rewrites it as a weak hook word. Indeed,
\begin{itemize}
    \item $HV$ is not a weak hook word in general because $I_{supp(H)} \ne I_{supp(V)}$.
    \item The value of $V$ is not changed in $\mathcal{A}_k$. The reason is as follows. We have
    \begin{equation*}
        supp(V) \subset supp(u) \subset supp(I_{supp(u)}).
    \end{equation*}
    Let $a,b \in supp(V)$, then $a,b \in supp(I_{supp(u)})$. 
    We have
    \begin{itemize}
        \item[1.] if $b=a+1 \mod (k+1)$, then $a<b$ in $I_{supp(V)}$ implies $a<b$ in $I_{supp(u)}$,
        \item[2.] if $b \ne a+1 \mod (k+1)$, then $a<b$ in $I_{supp(V)}$ does not imply $a<b$ in $I_{supp(u)}$. However, we still have $A_{ab}=A_{ba}$ by (\ref{R3}).
    \end{itemize}
    In Kk3', we see that the indices of $V$ are increasing with respect to $I_{supp(V)}$. Let $V^{(p)} \dots V^{(1)}$ be the factorization of $V$ into maximal segments of consecutive indices in $I_{supp(V)}$. By points 1. and 2., the rearrangement on $V$ with respect to $I_{supp(u)}$ preserves segments $V^{(i)}\, (1\leq i \leq p)$, but permutes $(V^{(p)}, \dots, V^{(1)})$. If $a,b \in supp(V)$ are in two different segments, by definition of $V^{(i)}$, we know that $b\ne a \pm 1 \mod (k+1)$. By point 2., the value of $V=V^{(p)}\dots V^{(1)}$ is unchanged by any permutation on $(V^{(p)},\dots,V^{(1)})$. So it is preserved after the rearrangement of $V$ in increasing order with respect to $I_{supp(u)}$.  
    \item The value of $H$ is not changed in $\mathcal{A}_k$ by similar arguments above.
\end{itemize}
\end{itemize}
\end{Defi}

\begin{example}\label{K_lambda_mu_r_ex}
\normalfont
Let $k=4,r=4,\lambda=(4,2,1,1),\mu=(4,2,2,1,1,1)$. Then $\kappa = (6,2,1,1), \pi=(7,3,3,1,1,1)$. We can identify $\kappa, \pi/\kappa$ with the red, and blue tableaux below.
$$\begin{array}{c|cccccccccc}
    &1&2&3&4&5&6&7&8&9\\
    \hline
     1&\color{red}{0}&\color{red}{1}&\color{red}{2}&\color{red}{3}&\color{red}{4}&\color{red}{0}&\color{blue}{1}&{2}&{3}\\
     2&\color{red}{4}&\color{red}{0}&\color{blue}{1}&{2}&{3}&{4}&{0}&{1}&{2}\\
     3&\color{red}{3}&\color{blue}{4}&\color{blue}{0}&{1}&{2}&{3}&{4}&{0}&{1}\\
     4&\color{red}{2}&{3}&{4}&{0}&{1}&{2}&{3}&{4}&{0}\\
     5&\color{blue}{1}&{2}&{3}&{4}&{0}&{1}&{2}&{3}&{4}\\
     6&\color{blue}{0}&{1}&{2}&{3}&{4}&{0}&{1}&{2}&{3}\\
     7&{4}&{0}&{1}&{2}&{3}&{4}&{0}&{1}&{2}\\
\end{array}$$
\begin{itemize}
\item[Kk1.] $\mathsf{pre.must.down.cell} = \{(3,3)\}$,  $\mathsf{must.down.residue} = \{0\}$, $\mathsf{must.down.cell} = \{(6,1),(3,3)\}$. 

\item[Kk2.] $\mathsf{pre.must.up.cell} = \{(5,1),(2,3)\}$, $\mathsf{must.up.residue} = \{ 1 \}$, $\mathsf{must.up.cell} = \{(5,1),(2,3),(1,7)\}$. 

\item[Kk3.] $\mathsf{up.or.down.residue} = \{4\}$, $\mathsf{up.choice} \in \{\emptyset, \{4\}\}$. We have 
\begin{itemize}
    \item[1.] if $\mathsf{up.choice} = \emptyset$ then $\mathcal{V} = \{1\}$,
    \item[2.] if $\mathsf{up.choice} = \{4\}$ then $\mathcal{V} = \{1,4\}$.
\end{itemize}

\item[Kk3'.] Suppose that in step Kk3 we have $\mathsf{up.choice} = \{4\}$, $\mathcal{V} =\{1,4\}$, then $\mathsf{absorb.cell.up} = \{(4,1)\}$, $\mathsf{absorb.residue.up} = \{ 2\}$. So $\mathsf{absorb.up.choice} \in \{\emptyset, \{2\}\}$. Both of them satisfy the inequality (\ref{ine_less_r}). Now
\begin{itemize}
    \item[1.] if $\mathsf{absorb.up.choice} = \emptyset$ then $V = A_{14}$,
    \item[2.] if $\mathsf{absorb.up.choice} = \{2\}$ then $V = A_{124}$.
\end{itemize}

\item[Kk4.] For each $V$ in step Kk3', we have $V*\kappa = (7,3,2,1,1)$.

\item[Kk5.] For each $V$ in step Kk3', we have $\mathcal{H} = \{ 0 \}$.

\item[Kk5'.] For each $V$ in step Kk3', we have $\mathsf{absorb.cell.down} = \{(3,2), (1,7)\}$, $\mathsf{absorb.residue.down} = \{4,1\}$. So $\mathsf{absorb.down.choice} \in \{\emptyset, \{1\}, \{4\}, \{1,4\}\}$. To satisfy the equality (\ref{equ_r}), 
\begin{itemize}
    \item[1.] if $\mathsf{absorb.up.choice} = \emptyset$ then $\mathsf{absord.down.choice} = \{1\}$ or $\{4\}$, 
    \item[2.] if $\mathsf{absorb.up.choice} = \{2\}$ then $\mathsf{absord.down.choice} = \emptyset $.  
\end{itemize}
In the first case, $H=A_{10}$ or $A_{04}$, respectively. In the second case, $H=A_0$.

\item[Kk6.] The words $HV$ created in steps Kk3'--Kk5' are $A_{1014}, A_{0414}, A_{0124}$. Similarly, if in step Kk3, $\mathsf{up.choice} = \emptyset$, we create words $A_{1041}, A_{0412}$. Rewrite those words to weak hook form, we get $K_{\lambda,\mu}^r = \{A_{1041},A_{0441},A_{0412}\}$.
\end{itemize}
\end{example}

\begin{prop}\label{K_lambda_mu_r} $K_{\lambda,\mu}^r = \{u \in \mathcal{A}_k$ such that $u$ is a weak hook word, $\widetilde{l}(u)=r$, $u*\lambda = \mu \}$.
\end{prop}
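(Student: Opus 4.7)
The plan is to establish the equality by proving both inclusions, leveraging Remarks \ref{Ob} and \ref{Ob'} to translate between the weak hook factorization $u = HV$ and the algorithmic data (must-up/down residues, absorb choices, and the free parameter $\mathsf{up.choice}$). Throughout, set $\kappa = \mathfrak{p}^{-1}(\lambda)$ and $\pi = \mathfrak{p}^{-1}(\mu)$, so the claim $u\ast\lambda = \mu$ is equivalent to $u\ast\kappa = \pi$ by (\ref{Hilambda}).

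\emph{Forward inclusion $K_{\lambda,\mu}^r \subseteq \{u : \widetilde{l}(u)=r,\ u\ast\lambda=\mu\}$.} Every element is built as $u = HV$ with $H = d_{\mathcal{H} \cup \mathsf{absorb.down.choice}}$ cyclically decreasing and $V = i_{\mathcal{V} \cup \mathsf{absorb.up.choice}}$ cyclically increasing, and the reordering in step Kk6 rewrites $u$ as a weak hook word in $\mathcal{A}_k$; the argument given inside Kk6 (using (\ref{R3}) on letters in distinct maximal segments) shows this rewriting preserves the algebra element. The weak length is $r$ by (\ref{equ_r}). To verify $u\ast\kappa = \pi$, first apply $V$ to $\kappa$: by Ob3 the cyclically increasing element attaches a vertical strip whose residues lie in $\mathcal{V}$, while the extra letters indexed by $\mathsf{absorb.up.choice}$ absorb the $\kappa$-removable corners selected in Kk3', so $V\ast\kappa = i_{\mathcal{V}}\ast\kappa$ as asserted in Kk4. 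Then Ob5, applied to the cyclically decreasing $H$ acting on $V\ast\kappa$, attaches the horizontal strip with residues in $\mathcal{H}$ and absorbs the $V\ast\kappa$-removable corners coming from $\mathsf{absorb.down.choice}$, yielding exactly $\pi$.

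\emph{Reverse inclusion.} Let $u$ be a weak hook word of weak length $r$ with $u\ast\lambda = \mu$, and write $u = HV$ with $H$ (strictly) decreasing and $V$ increasing with respect to $I_{supp(u)}$. Remark \ref{Ob'} reads off the algorithmic data from $u$: by Ob1' every residue of $\mathsf{must.down.residue}$ appears in $H$; by Ob3' every residue of $\mathsf{must.up.residue}$ appears in $V$; by Ob2' any further residue of $H$ lies in $\mathsf{absorb.residue.down}$, and by Ob4' any further residue of $V$ lies in $\mathsf{absorb.residue.up}$. Residues lying in $\mathsf{up.or.down.residue}$ may freely be assigned to $H$ or $V$, which is precisely the parameter $\mathsf{up.choice}$. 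Counting gives $\widetilde{l}(u) = |\mathcal{V}| + |\mathsf{absorb.up.choice}| + |\mathcal{H}| + |\mathsf{absorb.down.choice}|$, so (\ref{equ_r}) reduces to $\widetilde{l}(u) = r$, and (\ref{ine_less_r}) follows automatically from (\ref{equ_r}) and $|\mathcal{H}| \geq 0$. Running the algorithm with these extracted choices therefore reproduces $u$ up to the Kk6 rewriting, which, as already noted, leaves the element unchanged.

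\emph{Main obstacle.} The principal subtlety is not the outer structure, which is dictated by Remarks \ref{Ob} and \ref{Ob'}, but the bookkeeping in step Kk6: one must confirm that after reordering the letters of $HV$ with respect to $I_{supp(u)}$, the result is a bona fide weak hook word whose value in $\mathcal{A}_k$ still equals $HV$. The two itemized points inside Kk6 do this by showing that letters in distinct maximal segments of $V$ (or of $H$) differ by more than $\pm 1 \pmod{k+1}$, so commutation via (\ref{R3}) applies. A secondary concern is injectivity of the construction: different choice tuples $(\mathsf{up.choice}, \mathsf{absorb.up.choice}, \mathsf{absorb.down.choice})$ must yield distinct weak hook words. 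This follows because the decomposition $u = HV$ with $H$ decreasing and $V$ increasing is forced by the weak hook form, and the partition of $supp(H) \sqcup supp(V)$ into the sets $\mathsf{must.down.residue}$, $\mathsf{must.up.residue}$, $\mathsf{up.or.down.residue}$, $\mathsf{absorb.residue.up}$, and $\mathsf{absorb.residue.down}$ is intrinsic to the pair $(\kappa,\pi)$.
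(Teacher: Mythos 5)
Your proposal is correct and follows essentially the same route as the paper's proof: both decompose the weak hook word as $u=HV$, use Remark \ref{Ob'} to classify residues into the sets $\mathsf{must.down.residue}$, $\mathsf{must.up.residue}$, $\mathsf{up.or.down.residue}$ and the two absorb sets, and match these against steps Kk1--Kk6 (the paper phrases this as a five-case classification (H1), (H2), (V1), (V2), (HV) rather than two explicit inclusions, but the content is identical). One tiny slip: deducing (\ref{ine_less_r}) from (\ref{equ_r}) uses $|\mathsf{absorb.down.choice}|\geq 0$ together with $|supp(\pi/\kappa)|=|\mathcal{H}|+|\mathcal{V}|$, not ``$|\mathcal{H}|\geq 0$''.
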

\begin{proof}
Let $u=A_{i_1 \dots i_r}$ be a weak hook word in $\mathcal{A}_k$ of weak length $\widetilde{l}(u)=r$, with  $$i_1>\dots>i_j \leq i_{j+1} < \dots<i_r.$$ 
Let $H=A_{i_1 \dots i_j}$ and $V=A_{i_{j+1} \dots i_r}$. Set $\kappa = \mathfrak{p}^{-1}(\lambda)$, $\pi = \mathfrak{p}^{-1}(\mu)$. By Definitions \ref{def_bij}, $u*\lambda=\mu$ if and only if $u*\kappa =\pi$. After Remark \ref{Ob'}, the indices of $H,V$ will be classified into five cases below, and they are collected in the steps Kk1--Kk6 in the Definition \ref{Klambdamur} of $K_{\lambda,\mu}^r$: 
\begin{itemize}
    \item[(H1)] The residues mentioned in Ob1' must be indices of $H$. They must be added to $V*\kappa$.     
    \item[(H2)] The residues mentioned in Ob2' can be indices of $H$. If they are indices of $H$, then they must be absorbed to $V*\kappa$.
    \item[(V1)] The residues mentioned in Ob3' must be indices of $V$. They must be added to $\kappa$.  
    \item[(V2)] The residues mentioned in Ob4' can be indices of $V$. If they are indices of $V$, then they must be absorbed to $\kappa$.
    \item[(HV)] The residues that were not mentioned in Ob1'--Ob4' can be indices of $H$ or $V$. One element must be added to $\kappa$ as an index of $V$, or added to $V*\kappa$ as an index of $H$. In the picture of Remark \ref{Ob'}, the cells that were not mentioned in Ob1'--Ob4' are unmarked cells of $\pi/\kappa$.
\end{itemize}
Now we see (the numbering below corresponds to the numbering in Definition \ref{Klambdamur}):
\begin{itemize}
    \item[1.] The indices in case (H1) are collected in step Kk1 as $\mathsf{must.down.residue}$. The name of the set $\mathsf{must.down.residue}$ says the meaning ``must be an index of $H$".
    \item[2.] The indices in case (V1) are collected in step Kk2 as $\mathsf{must.up.residue}$. The name of the set $\mathsf{must.up.residue}$ says the meaning ``must be an index of $V$".
    \item[3.] The indices in case (HV) are collected in step Kk3. Indeed, the $(k+1)$-residues of cells in $$(d_{\mathsf{must.down.residue}})^{-1}*\pi/i_{\mathsf{must.up.residue}}*\kappa$$ can be indices of $H$ or $V$. We collect them in a set called $\mathsf{up.or.down.residue}$. Here, $\mathsf{down}$ means ``an index of $H$", $\mathsf{up}$ means ``an index of $V$". When we choose a subset $\mathsf{up.choice}$ of $\mathsf{up.or.down.residue}$ and combine it with the set $\mathsf{must.up.residue}$, we get a set $\mathcal{V}$ of all indices of $V$ that will be added to $\kappa$. 
    \item[3'.] So after step Kk3, the indices in case (V2) are collected in step Kk3'.  
    \item[4.] We have $i_{\mathcal{V}}*\kappa = V*\kappa$. 
    \item[5.] After step Kk3, the set of indices of $H$ added to $V*\kappa$ in cases (H1), (HV) is $\mathcal{H} = supp(\pi/V*\kappa)$. 
    \item[5'.] So after step Kk3', the indices in case (H2) are collected in step Kk5'. The condition $\widetilde{l}(u)=r$ is guaranteed by (\ref{ine_less_r}), (\ref{equ_r}).
    \item[6.] The condition $u$ is a weak hook word is handled in step Kk6 because the indices of $H$, $V$ are arranged in decreasing, increasing order with respect to $I_{supp(u)}$, respectively. 
\end{itemize}
\end{proof}

\begin{example}\label{coeff_K} 
\normalfont
In Example \ref{K_lambda_mu_r_ex}, we have $A_{1041} \in V_{1,wc}^{r,\mu}$, $A_{0441} \in U_{1,wc}^{r,\mu}$, $A_{0412} \in V_{2,wc}^{r,\mu}$. By Proposition \ref{K_lambda_mu_r}, (\ref{mn_gwk_partition}), the coefficient of $g^{(4)}_{(4,2,2,1,1,1)}$ in $p_4.g^{(4)}_{(4,2,1,1)}$ is $-2$.   
\end{example}

\section{Murnaghan-Nakayama rule for \texorpdfstring{$k$-}.Schur functions}\label{MNk}
\begin{Defi}
\normalfont
We consider {\em dot representation} of $\mathcal{A}_k$ on $\mathbb{C}[\widetilde{S}_{k+1}]$ \cite{lam2006affine} 
\begin{equation}\label{dot_rep}
    \zeta(A_i)(w)=A_i . w =\begin{cases}
    s_iw &\text{ if }l(s_iw)>l(w),\\
    0 &\text{ otherwise. }  
    \end{cases}
\end{equation}
Define $\psi: \mathcal{A}_k \times \widetilde{S}_{k+1} \rightarrow \mathbb{R}$ by $\psi(\alpha, w) = 1$. Then $\psi$ is $\zeta$-compatible. The family $\{\mathcal{F}_w^{(k)}\}_{w \in \widetilde{S}^0_{k+1}}$ in this case coincides with the family of $k$-Schur functions $\{s_{w}^{(k)}\}_{w \in \widetilde{S}^0_{k+1}}$ \cite{lapointe2007k,lam2006affine}. Since $\{\zeta(A_i)\}_{i\in \mathbb{Z}/(k+1)\mathbb{Z}}$ satisfy relations $\zeta(A_i)^2 = 0$, and (\ref{R2}), (\ref{R3}) (replace $s_i$ by $\zeta(A_i)$), $\zeta$ well defines a representation of $\mathcal{N}_k$ on $\mathbb{C}[\widetilde{S}_{k+1}]$ 
\begin{equation*}\label{dot_rep_induce}
    N_i = A_i I_k \mapsto \zeta(A_i).
\end{equation*}
\end{Defi}

Theorem \ref{MN_Fwk} taken from $\mathcal{N}_k$ gives us the Murnaghan-Nakayama rule for $k$-Schur functions. Furthermore, by identity $N_i^2=0$, the sets in (\ref{mn_Fwk}) will be simplified. Namely,
\begin{itemize}
    \item[1.] the sets of type $U$ become empty sets,
    \item[2.] the sets $V_{i,wc}^{r,w'}$ become $V_{i,c}^{r,w'}$. Indeed, let $\tau$ be the map in the proof of Lemma \ref{left_right_cancel}.2. For each $u$ satisfying condition (ii.3), we have $u=\tau(u)=0$ in $\mathcal{N}_k$.
\end{itemize}
We have proved the following result.
\begin{coro}[Theorem 3.1, \cite{bandlow2011murnaghan}]\label{MN_swk}
For $1 \leq r \leq k$ and $w \in \widetilde{S}_{k+1}^{0}$, we have 
\begin{equation}\label{mn_swk}
    p_r.s_{w}^{(k)} = \sum\limits_{\substack{w' \in \widetilde{S}_{k+1}^{0}}} \left( \sum\limits_{i=0}^{r-1} (-1)^i|V_{i,c}^{r,w'}| \right) s_{w'}^{(k)}.
\end{equation}
\end{coro}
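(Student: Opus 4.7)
The plan is to obtain Corollary \ref{MN_swk} as a direct specialization of Theorem \ref{MN_Fwk} with $\varphi = \zeta$ (the dot representation) and $\psi \equiv 1$, and then to use the nilCoxeter relation $\zeta(A_i)^2 = 0$ to collapse the $U$-type terms and the non-$k$-connected piece of the $V$-type terms. The hypotheses are easy to check: $\psi \equiv 1$ is trivially $\zeta$-compatible and depends trivially on $\widetilde{l}(\alpha),\alpha*_\zeta w, w$ (so $\widetilde{\psi} \equiv 1$); and by \cite{lapointe2007k, lam2006affine} the resulting Pieri family is $\{s_w^{(k)}\}$. Applying Theorem \ref{MN_Fwk} then gives the expansion (\ref{mn_Fwk}) for $p_r . s_w^{(k)}$, and it remains to simplify the three sums inside.

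For the $U$-type contributions, any weak hook word $u = A_{i_1 \ldots i_r}$ of hook type $U$ has $i_j = i_{j+1}$ for some $j$, so $\zeta(u)$ contains the factor $\zeta(A_{i_j})^2 = 0$ and hence $u *_\zeta w = 0$ for every $w$. Since membership in $U^{r,w'}_{i-1,wc}$ or $U^{r,w'}_{i-1,\overline{wc},\mathrm{left}}$ forces $u *_\zeta w = w' \in \widetilde{S}_{k+1}^0$ (in particular, nonzero), both sets are empty and contribute $0$.

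The main step is to reduce $V_{i,wc}^{r,w'}$ to $V_{i,c}^{r,w'}$. The trichotomy (ii.1)/(ii.2)/(ii.3) of the definition gives $V_{i,wc}^{r,w'} = V_{i,c}^{r,w'} \sqcup \{u \in V_{i,wc}^{r,w'}:\, u \text{ satisfies (ii.3)}\}$, so I need to show that case (ii.3) words vanish under $\zeta$. For such a word $u$, the letter $u_{min}$ appears once on each side of the hook. Arguing exactly as in the proof of Lemma \ref{left_right_cancel}.2, the right-hand occurrence of $u_{min}$ commutes in $\mathcal{A}_k$ with every letter of $u$ strictly less than $u_{min} - 1$ in the cyclic order $I_{supp(u)}$; the defining condition $a \ne c - 1 \pmod{k+1}$ in the specification of $u_{min}$ is exactly what makes relation (\ref{R3}) applicable. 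Moving the right-hand $u_{min}$ past all such letters produces a word $\tau(u) = u$ in $\mathcal{A}_k$ containing two adjacent copies of $A_{u_{min}}$, hence $\zeta(u) = \zeta(\tau(u)) = 0$, and case (ii.3) words also fail to contribute. The main obstacle is precisely this cyclic commutation argument — one must confirm the commutations are legal with respect to $I_{supp(u)}$ rather than the usual linear order — and once it is in hand, the expansion (\ref{mn_Fwk}) collapses directly to (\ref{mn_swk}).
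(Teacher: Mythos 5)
Your proof is correct and follows essentially the same route as the paper: specialize Theorem \ref{MN_Fwk} to the dot representation $\zeta$ with $\psi\equiv 1$, kill the $U$-type sets via $\zeta(A_i)^2=0$, and kill the case (ii.3) words in $V^{r,w'}_{i,wc}$ by applying the map $\tau$ from Lemma \ref{left_right_cancel}.2 to produce two adjacent copies of $A_{u_{min}}$. Your additional care about the legality of the commutations with respect to the cyclic order $I_{supp(u)}$ is a welcome elaboration of a point the paper leaves implicit, but it is not a different argument.
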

\begin{Defi}
\normalfont
Suppose that the diagram (\ref{bijection}) applying to $w \in \widetilde{S}^{0}_{k+1}$ is $w \mapsto \kappa \mapsto \lambda$. To translate Corollary \ref{MN_swk} to $\{s_{\lambda}^{(k)}\}_{\lambda \in \mathcal{P}_k}$, we need a translation of (\ref{dot_rep}) to $\mathbb{C}[\mathcal{C}_{k+1}]$ 
\begin{equation}\label{Ai.kappa}
    A_i.\kappa = \begin{cases}
    \kappa \text{ with all $\kappa$-addable corners of $(k+1)$-residue $i$ added if they exist,}\\
    0 \text{ otherwise,}
    \end{cases}
\end{equation}
and then to $\mathbb{C}[\mathcal{P}_k]$
\begin{equation}\label{Ai.lambda}
    A_i.\lambda = \begin{cases}
    \lambda + e_t \text{ where $t$ is the lowest row of $N_i.\kappa/\kappa$ if $N_i.\kappa \supsetneq \kappa$,} \\
    0 \text{ otherwise.}
    \end{cases}
\end{equation}
The actions were mentioned in \cite{bandlow2011murnaghan,lam2006affine}, and they are simple adaptations of \cite[Definition 18, Proposition 22]{lapointe2005tableaux} from $s_i$ to $A_i$. We see that (\ref{Ai.kappa}), (\ref{Ai.lambda}) are (\ref{Hikappa}), (\ref{Hilambda}) without considering $\kappa$-removable corners, respectively. Hence, observations in Remarks \ref{Ob}, \ref{Ob'} for $u*\kappa$ are automatically transferred to $u.\kappa$ without considering $\kappa$-removable corners. And observations in Remark \ref{ob} for $u*\lambda$ are automatically transferred to $u.\lambda$ with all $\mathcal{E}^{(p')} =\emptyset$. Remember that the words $u$ we are considering are $k$-connected, and have hook type $V$. It immediately leads to the following results. The equivalence to \cite[Theorem 1.2]{bandlow2011murnaghan} will be explained in Remark \ref{sameTheo1.2}.
\end{Defi}
\begin{coro}[Theorem 1.2, \cite{bandlow2011murnaghan}]\label{MN_swk_partition}
For $1 \leq r \leq k$ and $\lambda \in \mathcal{P}_k$, we have  
\begin{equation}\label{mn_swk_partition}
    p_r.s_{\lambda}^{(k)} = \sum\limits_{\mu} \left( \sum\limits_{i=0}^{r-1} (-1)^i|V_{i,c}^{r,\mu}| \right) s_{\mu}^{(k)},
\end{equation}
where the sum runs over $\mu \in \mathcal{P}_k$ such that
\begin{itemize}
    \item[(k0)] \label{k0} $\lambda \subset \mu$, $\lambda^{(k)} \subset \mu^{(k)}$, 
    \item[(k1)] \label{k1} $|\mu/\lambda| = r$,
    \item[(k2)] \label{k2} $\mathfrak{p}^{-1}(\mu)/\mathfrak{p}^{-1}(\lambda)$ is a ribbon,
    \item[(k3)] \label{k3} $supp(\mathfrak{p}^{-1}(\mu)/\mathfrak{p}^{-1}(\lambda))$ is $k$-connected,   
    \item[(k4)] \label{k4} $ht(\mu/\lambda) + ht(\mu^{(k)}/\lambda^{(k)})=r-1$.
\end{itemize}
\end{coro}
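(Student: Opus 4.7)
The plan is to deduce Corollary \ref{MN_swk_partition} from Corollary \ref{MN_swk} by translating the sum over Grassmannian elements $w' \in \widetilde{S}^0_{k+1}$ into a sum over partitions $\mu \in \mathcal{P}_k$ via the bijection \eqref{bijection}, using the dot-action dictionaries \eqref{Ai.kappa}, \eqref{Ai.lambda}. The key observation is that the dot action coincides with the star action \eqref{Hikappa}, \eqref{Hilambda} except that the ``absorption'' branch returns $0$ rather than $\kappa$ (or $\lambda$). Hence, for any weak hook word $u \in V_{i,c}^{r,\mu}$ with $u.\lambda = \mu$, every letter of $u$ strictly adds a cell, so the observations packaged in Remarks \ref{Ob}, \ref{Ob'} apply verbatim with no $\kappa$-removable corners absorbed. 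The task reduces to verifying that the existence of such a $u$ forces $\mu$ to satisfy (k0)--(k4).

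Setting $\kappa = \mathfrak{p}^{-1}(\lambda)$ and $\pi = \mathfrak{p}^{-1}(\mu)$, I check the four partition-side conditions. Condition (k0) holds because cells are only added, with the $k$-conjugate inclusion following from the symmetry $A_i.\lambda=\mu \iff A_{k+1-i}.\lambda^{(k)}=\mu^{(k)}$ already used in ob4. Condition (k1) strengthens the inequality of (K1) to the equality $|\mu/\lambda| = \widetilde{l}(u) = r$, precisely because absorption is forbidden. Condition (k2) is immediate from Ob6. Condition (k3) follows from Ob7 and the $k$-connectedness built into $V_{i,c}^{r,\mu}$, combined with the fact that without absorption $\mathrm{supp}(\pi/\kappa) = \mathrm{supp}(u)$, so the support of the ribbon is itself $k$-connected.

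For condition (k4), I invoke Remark \ref{ob}: the bounds \eqref{ht_mu_lambda} and \eqref{ht_mut_lambdat} combine into $ht(\mu/\lambda) + ht(\mu^{(k)}/\lambda^{(k)}) \leq r-1$, with equality holding precisely when $u$ has hook type $V$, is $k$-connected, and the absorption sets $\mathcal{E}^{(p')}$ vanish for both $u$ and its anti-weak dual $\overline{u}$ (cf.\ ob5). The first two of these hypotheses are exactly the defining properties of $V_{i,c}^{r,\mu}$, and the third is automatic under the dot representation. The main obstacle is tracking this equality case cleanly: one must verify that whenever the $\mathcal{E}^{(p')}$ vanish, the length-by-height bookkeeping over the factorization of $V$ (and $V'$ for $\overline{u}$) into maximal segments of consecutive indices saturates both \eqref{ht_mu_lambda} and \eqref{ht_mut_lambdat} simultaneously. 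This is exactly the content already isolated in Remark \ref{ob}, so the final step amounts to specializing that analysis to the dot setting; no separate enumeration algorithm analogous to Definition \ref{Klambdamur} is needed, and the match with \cite[Theorem 1.2]{bandlow2011murnaghan} will then be handled by Remark \ref{sameTheo1.2}.
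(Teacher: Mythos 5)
Your proposal is correct and follows essentially the same route as the paper: the paper likewise deduces the corollary from Corollary \ref{MN_swk} by observing that the dot action is the star action with the absorption branch removed, so that Remarks \ref{Ob}, \ref{ob}, \ref{Ob'} transfer with all $\mathcal{E}^{(p')}=\emptyset$, and the conditions (k0)--(k4) follow as the degenerate (equality) cases of (K0)--(K4) for words of hook type $V$ that are $k$-connected. The only cosmetic difference is that you spell out the verification of (k0)--(k4) individually, whereas the paper leaves this implicit after noting the specialization.
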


\begin{Defi}
\normalfont
For $\lambda,\mu$ satisfying the conditions in Corollary \ref{MN_swk_partition}, we give a way to find the sets which contribute to (\ref{mn_swk_partition}). Namely, let $k_{\lambda,\mu}^r$ be a set constructed by the same steps in Definition \ref{Klambdamur} for $K_{\lambda,\mu}^r$ with $\mathsf{absorb.up.choice}$ in step Kk3' and $\mathsf{absorb.down.choice}$ in step Kk5' are empty sets. In particular, $k^r_{\lambda,\mu} \subset K^r_{\lambda,\mu}$. Because of (\ref{equ_r}), $k^r_{\lambda,\mu} \ne \emptyset$ if and only if $k^r_{\lambda,\mu} = K^r_{\lambda,\mu}$.  
\end{Defi}

\begin{prop}\label{k_lambda_mu_r} $k_{\lambda,\mu}^r = \{u \in \mathcal{A}_k$ such that $u$ is a weak hook word, $\widetilde{l}(u)=r$, $u.\lambda = \mu \}$.
\end{prop}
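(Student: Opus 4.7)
The plan is to mimic the proof of Proposition \ref{K_lambda_mu_r} in the simpler setting of the dot representation, using that the dot action (\ref{Ai.kappa}) is obtained from the star action (\ref{Hikappa}) by deleting the absorption clause: a letter $A_i$ either strictly enlarges the current $(k+1)$-core by adding all $\kappa$-addable corners of residue $i$, or else kills it. Consequently, if $u.\kappa = \pi$ with $u = A_{i_1\dots i_r}$ a weak hook word and $\pi \neq 0$, every letter of $u$ must genuinely enlarge the current core; in particular no $\kappa$-removable or $\kappa^V$-removable corner can ever be absorbed.

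First I would write $u = HV$ with $H$ the left side and $V$ the right side, set $\kappa = \mathfrak{p}^{-1}(\lambda)$ and $\pi = \mathfrak{p}^{-1}(\mu)$, and invoke Remark \ref{Ob'}, which applies verbatim to the dot action as noted in the paragraph preceding Corollary \ref{MN_swk_partition}. This classifies each residue appearing in $u$ into one of the five cases (H1), (H2), (V1), (V2), (HV) from the proof of Proposition \ref{K_lambda_mu_r}. By the previous paragraph, cases (H2) and (V2) (the absorption cases) cannot occur, so only (H1), (V1), and (HV) remain. These are precisely the residues recorded in $\mathsf{must.down.residue}$ (step Kk1), $\mathsf{must.up.residue}$ (step Kk2), and $\mathsf{up.or.down.residue}$ (step Kk3) of the construction of $k_{\lambda,\mu}^r$, while the forced emptiness of $\mathsf{absorb.up.choice}$ in Kk3' and $\mathsf{absorb.down.choice}$ in Kk5' matches the defining stipulation of $k_{\lambda,\mu}^r$. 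The length condition $\widetilde{l}(u) = r$ then becomes $|\mathcal{H}| + |\mathcal{V}| = r$ via (\ref{equ_r}), and by (\ref{Ai.lambda}) this corresponds to $|\mu/\lambda| = r$, consistent with (k1).

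Conversely, any $u$ produced by the construction arises from an admissible distribution of these three types of residues into $H$ and $V$, and the rewriting of $HV$ as a weak hook word in step Kk6 carries over unchanged from the proof of Proposition \ref{K_lambda_mu_r}, since it is a manipulation purely in $\mathcal{A}_k$ independent of the representation. The only real (and minor) obstacle is to verify cleanly that ruling out the absorption clause in (\ref{Ai.kappa}) genuinely forces $\mathsf{absorb.up.choice} = \mathsf{absorb.down.choice} = \emptyset$, rather than merely making them optional. This follows because any such absorbed letter would act trivially on its current core and hence evaluate to $0$ under $\zeta$, contradicting $u.\kappa = \pi \neq 0$; once this is made explicit, the remainder of the argument is a direct bookkeeping parallel to Proposition \ref{K_lambda_mu_r}.
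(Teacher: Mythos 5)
Your proposal is correct and takes essentially the same route as the paper, which proves Proposition \ref{k_lambda_mu_r} implicitly by noting that (\ref{Ai.kappa}), (\ref{Ai.lambda}) are (\ref{Hikappa}), (\ref{Hilambda}) with the absorption clauses deleted, so the argument for Proposition \ref{K_lambda_mu_r} carries over with $\mathsf{absorb.up.choice}=\mathsf{absorb.down.choice}=\emptyset$ forced. Your explicit justification that absorption is forbidden rather than optional (an absorbed letter would send the core to $0$ under $\zeta$, contradicting $u.\kappa=\pi\neq 0$) is precisely the point the paper leaves implicit.
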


\begin{rema}\label{sameTheo1.2}
\normalfont
When $k_{\lambda,\mu}^{r}$ is nonempty, we have $|k_{\lambda,\mu}^{r}|=1$ (see \cite[Lemma 4.1(2)]{bandlow2011murnaghan}). Suppose that it contains $u$, then $ht(\mu/\lambda)=\widetilde{l}(V)=asc(u)$ by (\ref{ht_mu_lambda}). The coefficient of $s_{\mu}^{(k)}$ in (\ref{mn_swk_partition}) is
\begin{equation*}
    \sum\limits_{i=0}^{r-1} (-1)^i |V_{i,c}^{r,\mu}| = (-1)^{ht(\mu/\lambda)} + \sum\limits_{i \ne ht(\mu/\lambda)} (-1)^i |V_{i,c}^{r,\mu}| = (-1)^{ht(\mu/\lambda)}.
\end{equation*} This explains why Corollary \ref{MN_swk_partition} is equivalent to \cite[Theorem 1.2]{bandlow2011murnaghan}.
\end{rema}

\begin{example}\label{coeff_k}
\normalfont
With set up in Example \ref{K_lambda_mu_r_ex}, we have $k^r_{\lambda,\mu} =\emptyset$. This fits the fact that $\mu$ does not satisfy condition (k4). If we choose $\mu = (4,2,2,2,2)$, then $k^r_{\lambda,\mu} = \{A_{2134}\}$. Since $A_{2134} \in V^{r,\mu}_{2,c}$, then by (\ref{mn_swk_partition}), the coefficient of $s^{(k)}_{(4,2,2,2,2)}$ in $p_4.s^{(4)}_{(4,2,1,1)}$ is $1$.
\end{example}
\bibliography{references}{}

\begin{thebibliography}{KHST23}

\bibitem[BMS22]{blasiak2022k}
Jonah Blasiak, Jennifer Morse, and George~H Seelinger.
\newblock {K}-theoretic {C}atalan functions.
\newblock {\em Advances in Mathematics}, 404:108421, 2022.

\bibitem[BSZ11]{bandlow2011murnaghan}
Jason Bandlow, Anne Schilling, and Mike Zabrocki.
\newblock The {M}urnaghan-{N}akayama rule for $k$-{S}chur functions.
\newblock {\em Journal of Combinatorial Theory, Series A}, 118(5):1588--1607,
  2011.

\bibitem[EG87]{edelman1987balanced}
Paul Edelman and Curtis Greene.
\newblock Balanced tableaux.
\newblock {\em Advances in Mathematics}, 63(1):42--99, 1987.

\bibitem[FG98]{fomin1998noncommutative}
Sergey Fomin and Curtis Greene.
\newblock Noncommutative {S}chur functions and their applications.
\newblock {\em Discrete Mathematics}, 193(1-3):179--200, 1998.

\bibitem[FS94]{fomin1994schubert}
Sergey Fomin and Richard Stanley.
\newblock {S}chubert polynomials and the nil{C}oxeter algebra.
\newblock {\em Advances in Mathematics}, 103(2):196--207, 1994.

\bibitem[Hal95]{halverson1995q}
Tom Halverson.
\newblock A $q$-rational {M}urnaghan-{N}akayama rule.
\newblock {\em Journal of Combinatorial Theory, Series A}, 71(1):1--18, 1995.

\bibitem[HR98]{halverson1998murnaghan}
Tom Halverson and Arun Ram.
\newblock Murnaghan-{N}akayama rules for characters of {I}wahori-{H}ecke
  algebras of the complex reflection groups {G}$(r, p, n)$.
\newblock {\em Canadian Journal of Mathematics}, 50(1):167--192, 1998.

\bibitem[IIM20]{ikeda2020peterson}
Takeshi Ikeda, Shinsuke Iwao, and Toshiaki Maeno.
\newblock Peterson isomorphism in {K}-theory and relativistic {T}oda lattice.
\newblock {\em International Mathematics Research Notices},
  2020(19):6421--6462, 2020.

\bibitem[IIN22]{ikeda2022closed}
Takeshi Ikeda, Shinsuke Iwao, and Satoshi Naito.
\newblock Closed $ k $-{S}chur {K}atalan functions as {K}-homology {S}chubert
  representatives of the affine {G}rassmannian.
\newblock {\em preprint arXiv:2203.14483}, 2022.

\bibitem[KHST23]{tuan2021murnaghan}
Nguyen~Duc Khanh, Dang~Tuan Hiep, Tran~Ha Son, and Do~Le~Hai Thuy.
\newblock A {M}urnaghan-{N}akayama rule for {G}rothendieck polynomials of
  {G}rassmannian type.
\newblock {\em Annals of Combinatorics}, to appear, 2023.

\bibitem[Kon12]{konvalinka2012skew}
Matja{\v{z}} Konvalinka.
\newblock Skew quantum {M}urnaghan-{N}akayama rule.
\newblock {\em Journal of Algebraic Combinatorics}, 35(4):519--545, 2012.

\bibitem[Lam06]{lam2006affine}
Thomas Lam.
\newblock Affine {S}tanley symmetric functions.
\newblock {\em American Journal of Mathematics}, 128(6):1553--1586, 2006.

\bibitem[Lam08]{lam2008schubert}
Thomas Lam.
\newblock Schubert polynomials for the affine {G}rassmannian.
\newblock {\em Journal of the American Mathematical Society}, 21(1):259--281,
  2008.

\bibitem[LLM03]{lapointe2003tableau}
Luc Lapointe, Alain Lascoux, and Jennifer Morse.
\newblock Tableau atoms and a new {M}acdonald positivity conjecture.
\newblock {\em Duke Mathematical Journal}, 116(1):103--146, 2003.

\bibitem[LM05]{lapointe2005tableaux}
Luc Lapointe and Jennifer Morse.
\newblock Tableaux on $k+1$-cores, reduced words for affine permutations, and
  $k$-{S}chur expansions.
\newblock {\em Journal of Combinatorial Theory, Series A}, 112(1):44--81, 2005.

\bibitem[LM07]{lapointe2007k}
Luc Lapointe and Jennifer Morse.
\newblock A $k$-tableau characterization of $k$-{S}chur functions.
\newblock {\em Advances in mathematics}, 213(1):183--204, 2007.

\bibitem[LM16]{lubeck2016murnaghan}
Frank L{\"u}beck and Gunter Malle.
\newblock A {M}urnaghan-{N}akayama rule for values of unipotent characters in
  classical groups.
\newblock {\em Representation Theory of the American Mathematical Society},
  20(6):139--161, 2016.

\bibitem[LR34]{littlewood1934group}
Dudley~Ernest Littlewood and Archibald~Read Richardson.
\newblock Group characters and algebra.
\newblock {\em Philosophical Transactions of the Royal Society of London.
  Series A, Containing Papers of a Mathematical or Physical Character},
  233(721-730):99--141, 1934.

\bibitem[LS10]{lam2010quantum}
Thomas Lam and Mark Shimozono.
\newblock Quantum cohomology of {G}/{P} and homology of affine {G}rassmannian.
\newblock {\em Acta mathematica}, 204(1):49--90, 2010.

\bibitem[LSS10]{lam2010k}
Thomas Lam, Anne Schilling, and Mark Shimozono.
\newblock K-theory {S}chubert {C}alculus of the affine {G}rassmannian.
\newblock {\em Compositio Mathematica}, 146(4):811--852, 2010.

\bibitem[Mac98]{macdonald1998symmetric}
Ian~Grant Macdonald.
\newblock {\em Symmetric functions and {H}all polynomials}.
\newblock Oxford university press, 1998.

\bibitem[Man01]{MR1852463}
Laurent Manivel.
\newblock {\em Symmetric functions, {S}chubert polynomials and degeneracy
  loci}, volume~6 of {\em SMF/AMS Texts and Monographs}.
\newblock American Mathematical Society, Providence, RI; Soci\'{e}t\'{e}
  Math\'{e}matique de France, Paris, 2001.
\newblock Translated from the 1998 French original by John R. Swallow, Cours
  Sp\'{e}cialis\'{e}s [Specialized Courses], 3.

\bibitem[Mor12]{morse2012combinatorics}
Jennifer Morse.
\newblock Combinatorics of the {K}-theory of affine {G}rassmannians.
\newblock {\em Advances in Mathematics}, 229(5):2950--2984, 2012.

\bibitem[MS18]{morrison2018two}
Andrew Morrison and Frank Sottile.
\newblock Two {M}urnaghan-{N}akayama rules in {S}chubert {C}alculus.
\newblock {\em Annals of Combinatorics}, 22(2):363--375, 2018.

\bibitem[Mur37]{murnaghan1937characters}
Francis Murnaghan.
\newblock The characters of the symmetric group.
\newblock {\em American Journal of Mathematics}, 59(4):739--753, 1937.

\bibitem[Nak40a]{nakayama1940someI}
Tadasi Nakayama.
\newblock On some modular properties of irreducible representations of a
  symmetric group, {I}.
\newblock In {\em Japanese journal of mathematics: transactions and abstracts},
  volume~17, pages 165--184. The Mathematical Society of Japan, 1940.

\bibitem[Nak40b]{nakayama1940someII}
Tadasi Nakayama.
\newblock On some modular properties of irreducible representations of
  symmetric groups, {II}.
\newblock In {\em Japanese journal of mathematics: transactions and abstracts},
  volume~17, pages 411--423. The Mathematical Society of Japan, 1940.

\bibitem[Nor79]{norton19790}
PN~Norton.
\newblock 0-{H}ecke algebras.
\newblock {\em Journal of the Australian Mathematical Society}, 27(3):337--357,
  1979.

\bibitem[Pet97]{Peterson}
Dale Peterson.
\newblock Quantum cohomology of {G}/{P}.
\newblock {L}ecture notes at {MIT}, 1997.

\bibitem[Ros14]{ross2014loop}
Dustin Ross.
\newblock The loop {M}urnaghan-{N}akayama rule.
\newblock {\em Journal of Algebraic Combinatorics}, 39(1):3--15, 2014.

\bibitem[Tak19]{takigiku2019pieri}
Motoki Takigiku.
\newblock A {P}ieri formula and a factorization formula for sums of
  {K}-theoretic $k$-{S}chur functions.
\newblock {\em Algebraic Combinatorics}, 2(4):447--480, 2019.

\bibitem[Tew16]{tewari2016murnaghan}
Vasu Tewari.
\newblock A {M}urnaghan-{N}akayama rule for noncommutative {S}chur functions.
\newblock {\em European Journal of Combinatorics}, 58:118--143, 2016.

\end{thebibliography}
\bibliographystyle{alpha}

\noindent Institut für Algebra und Geometrie, Otto-von-Guericke-Universität Magdeburg, Germany.\\
E-mail: \href{khanh.mathematic@gmail.com}{khanh.mathematic@gmail.com} \\
\end{document}